\documentclass[12pt,a4paper]{amsart}
\usepackage[utf8]{inputenc}
\usepackage[active]{srcltx}
\usepackage{amsthm}

\makeatletter

\pdfpageheight\paperheight
\pdfpagewidth\paperwidth

\numberwithin{equation}{section}
\numberwithin{figure}{section}

\usepackage{amsmath}

\usepackage{amssymb}
\usepackage{stmaryrd}
\usepackage{amsthm}
\usepackage[mathscr]{eucal}
\usepackage{amscd}
\usepackage[all]{xy}
\usepackage{url}




\newtheorem{Thm}{Theorem}[subsection]

\usepackage{enumerate}

\usepackage{comment}

\usepackage{hyperref}
\hypersetup{%
pdftitle={Preprint},
pdfauthor={},
pdfkeywords={},
bookmarksnumbered,
pdfstartview={FitH},
breaklinks=true,
colorlinks=true,
}%
\usepackage[all]{hypcap} 

\usepackage{breakurl}

\usepackage{color}


\newtheorem{Lem}[Thm]{Lemma}

\newtheorem{Conj}[Thm]{Conjecture}

\newtheorem{Eg}[Thm]{Example}
\newtheorem{Rem}[Thm]{Remark}
\newtheorem{Def}[Thm]{Definition}


\newtheorem*{Def*}{Definition}
\newtheorem*{Thm*}{Theorem}

\newtheorem*{Conj*}{Conjecture}


\newcommand{\Z}{\mathbb{Z}}
\newcommand{\N}{\mathbb{N}}




\renewcommand{\hat}[1]{\widehat{#1}}
\renewcommand{\tilde}[1]{\widetilde{#1}}

\newcommand{\opname}[1]{\operatorname{\mathsf{#1}}}



\newcommand{\pr}{\opname{pr}}

\newcommand{\inj}{\opname{inj}}











\renewcommand{\deg}{\opname{deg}}




\newcommand{\Hf}{{\frac{1}{2}}}

\newcommand{\Rm}[1]{{\longmapsto}}
\newcommand{\Lm}[1]{{\longmapsfrom}}

\newcommand{\cA}{{\mathcal A}}

\newcommand{\cT}{{\mathcal T}}


\newcommand{\bI}{{\mathbf I}}

\newcommand{\bL}{{\mathbf L}}

\newcommand{\Bm}{{\mathbf m}}





\newcommand{\tB}{{\widetilde{B}}}





\newcommand{\can}{L}
\newcommand{\gen}{\mathbb{L}}

\newcommand{\qClAlg}{{\cA}}




























\usepackage{tikz}
\usetikzlibrary{positioning,shapes,shadows,arrows,snakes}

\pgfdeclarelayer{edgelayer}
\pgfdeclarelayer{nodelayer}
\pgfsetlayers{edgelayer,nodelayer,main}

\tikzstyle{none}=[inner sep=0pt]
\tikzstyle{black box}=[draw=black, fill=black!25]
\tikzstyle{white box}=[draw=black, fill=white]
\tikzstyle{black circle}=[circle,draw=black!50, fill=black!25]
\tikzstyle{red circle}=[circle,draw=red!50, fill=red!25]
\tikzstyle{blue circle}=[circle,draw=blue!50, fill=blue!25]
\tikzstyle{green circle}=[circle,draw=green!50, fill=green!25]
\tikzstyle{yellow circle}=[circle,draw=yellow!50, fill=yellow!25]

\newcommand{\thistheoremname}{}
\newtheorem*{genericthm*}{\thistheoremname}
\newenvironment{namedthm*}[1]
  {\renewcommand{\thistheoremname}{#1}%
   \begin{genericthm*}}
  {\end{genericthm*}}

\renewcommand{\inj}{{\bI}}
\renewcommand{\can}{{\bL}}

\newcommand{\degL}{\mathrm{D}}



\makeatother

\begin{document}

\title[]{Compare triangular bases of acyclic quantum cluster algebras}
\author{Fan Qin}

\email{qin.fan.math@gmail.com}
\begin{abstract}
Given a quantum cluster algebra, we show that its triangular bases
defined by Berenstein and Zelevinsky and those defined by the author
are the same for the seeds associated with acyclic quivers. This result
implies that the Berenstein-Zelevinsky's basis contains all the quantum
cluster monomials.

We also give an easy proof that the two bases are the same for the
seeds associated with bipartite skew-symmetrizable matrices.
\end{abstract}

\maketitle
\tableofcontents{}

\section{Introduction}

\label{sec:intro}

\subsection{Cluster algebras}

In \cite{fomin2002cluster}, Fomin and Zelevinsky invented cluster
algebras as a combinatorial approach to dual canonical bases of quantum
groups (discovered by Lusztig \cite{Lusztig90} and Kashiwara \cite{Kashiwara90}
independently). The quantum cluster algebras were later introduced
in \cite{BerensteinZelevinsky05}. These algebras possess many seeds,
which are constructed recursively by an algorithm called mutation.
Every seed consists of some skew-symmetrizable matrix and a collection
of generators called (quantum) cluster variables. We might view these
seeds as analog of local charts of algebraic varieties\footnote{In fact, we have a family of varieties called cluster varieties, whose
local charts are tori, local coordinate functions are cluster variables,
and transition maps are determined by the matrices in the seeds, cf.
\cite{FockGoncharov03}.}.

There are many attempts to ``good'' bases of cluster algebras, cf.
\cite{GeissLeclercSchroeer10,GeissLeclercSchroeer10b,GeissLeclercSchroeer11}
\cite{musiker2013bases,thurston2014positive} \cite{HernandezLeclerc09}
\cite{Nakajima09,KimuraQin14,Qin12} \cite{lee2014greedy,lee2014greedyPNAS}
\cite{gross2014canonical} \cite{Qin15} \cite{KKKO15}. In view of
the original motivation of Fomin and Zelevinsky, a good basis should
contain all the quantum cluster monomials (monomials of quantum cluster
variables belonging to the same seed).

\subsection{Berenstein-Zelevinsky's triangular basis approach}

In \cite{BerensteinZelevinsky2012}, Berenstein and Zelevinsky proposed
the following new approach to good bases of quantum cluster algebras:

\begin{itemize}

\item Inspired by the Kazhdan-Lusztig theory, construct a triangular
basis $C^{t}$ in each seed $t$ such that it contains all the quantum
cluster monomials in that seed. More precisely, first construct a
basis consisting of some ordered products of quantum cluster variables,
then Lusztig's lemma \cite[Theorem 1.1]{BerensteinZelevinsky2012}
guarantees a unique new basis whose transition matrix from the old
one is unitriangular, whence the name triangular basis.

\item Prove that these triangular bases give rise to a common basis
for all seeds.

\end{itemize}

If this approach works, then we have a common triangular basis containing
the quantum cluster monomials in all seeds. However, Berenstein-Zelevinsky's
construction only works for those special seeds of acyclic type, cf.
Section \ref{sub:BZ-basis} for the definition. They arrived at a
common basis for the acyclic seeds, which we call the $BZ$-basis
and denote by $C$.

On the other hand, it is known that the quantum cluster algebras associated
with acyclic quiver and $z$-coefficient pattern are isomorphic to
some quantum unipotent subgroups and, consequently, inherit the dual
canonical bases, cf. \cite{GeissLeclercSchroeer11}\cite{KimuraQin14}.
In \cite{KimuraQin14}, Kimura and the author showed that, for such
quantum cluster algebras, the dual canonical bases contain all the
quantum cluster monomials. It is natural to propose the following
conjecture.

\begin{Conj}\label{conj:BZ_basis_good}

For quantum cluster algebras associated with an acyclic quiver and
$z$-coefficient pattern, its dual canonical basis agrees with Berenstein-Zelevinsky's
triangular basis $C$.

\end{Conj}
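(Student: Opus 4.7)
The plan is to use the main theorem of the present paper to reduce the conjecture to a direct comparison between the author's triangular basis $\can$ and Lusztig's dual canonical basis under the Geiss--Leclerc--Schr\"oer/Kimura--Qin isomorphism between the $z$-coefficient quantum cluster algebra and a quantum unipotent subgroup. Once the main theorem is in hand, the Berenstein--Zelevinsky basis $C$ coincides with $\can$ in every acyclic seed, so it suffices to fix one such seed and prove that the GLS/Kimura--Qin isomorphism carries $\can$ to the dual canonical basis.

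The key observation is that both bases admit a common type of characterization: bar-invariance together with a unitriangular expansion in an explicit PBW-type basis, with uniqueness supplied by Lusztig's lemma \cite[Theorem 1.1]{BerensteinZelevinsky2012}. On the cluster side, $\can$ is unitriangular in a ``cluster PBW'' family $\{\pbw(w)\}$ built from ordered products of the quantum cluster variables in the chosen acyclic seed; these variables are dual canonical basis elements by \cite{KimuraQin14}. On the quantum group side, the dual canonical basis is unitriangular in Lusztig's PBW basis attached to a reduced expression adapted to the acyclic quiver. The plan is to show, after a suitable refinement of orderings, that the two PBW-type bases are mutually unitriangular and mutually bar-compatible; Lusztig's lemma will then identify the two canonical elements seed by seed.

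The main obstacle I anticipate is precisely this matching of orderings. The cluster PBW basis is ordered by the multi-exponents of the quantum cluster variables in the acyclic seed, whereas Lusztig's PBW basis is ordered by a convex order on positive roots coming from a reduced word. Producing a common dominance refinement that witnesses unitriangularity in both directions requires precise information about the highest weight components of products of dual canonical basis elements indexed by the simple objects of the acyclic seed. I expect that this can be handled using the Hernand\'ez--Leclerc-type multiplicative structure of the dual canonical basis on subcategories associated with an acyclic quiver, together with a quiver-adapted reduced expression, but the combinatorial bookkeeping linking the cluster side to the quantum group side appears to be the technical heart of the argument and is what prevents the conjecture from being an immediate corollary of the results of this paper combined with \cite{KimuraQin14}.
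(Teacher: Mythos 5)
Your first step---invoking Theorem \ref{thm:acyclic} to replace $C$ by $\can^{t}$ in a fixed acyclic seed---is exactly the paper's move. The problem is the status you assign to the second step. You treat the identification of $\can^{t}$ with the dual canonical basis as an open comparison problem requiring a reconciliation of two a priori different PBW-type families via a common refinement of orderings and a fresh appeal to Lusztig's lemma, and you flag the bookkeeping for this as the ``technical heart.'' But that identification is not something left to prove here: it is precisely the content of \cite{KimuraQin14} together with the way $\can^{t}$ is set up in \cite{Qin15}, and the paper recalls it explicitly in the proof of Theorem \ref{thm:ayclic_triangular_basis}. Under the Geiss--Leclerc--Schr\"oer isomorphism with the quantum unipotent subgroup, the factors $X_{i}(t)$ and $I_{k}(t)$ are (after $q$-power normalization and localization at the frozen variables) the factors of Lusztig's dual PBW basis adapted to the acyclic quiver, so your ``cluster PBW'' family $\inj^{t}$ and the quantum-group dual PBW basis are one and the same set, not two families in need of a nontrivial unitriangular comparison. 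Once this is acknowledged, the defining properties of $\can^{t}$ in Definition \ref{def:triangular_basis} are the standard characterization of the dual canonical basis, and the conjecture really is an immediate corollary of Theorem \ref{thm:acyclic}, as the paper observes in one sentence. The obstacle you anticipate has already been cleared in the cited references; re-deriving it by the program you sketch would amount to re-proving that $\can^{t}$ is well defined and equals the dual canonical basis, which is redundant and obscures the fact that the only new ingredient needed is the equality $C=\can^{t}$.
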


The verification of this conjecture would imply the desired property
that Berenstein-Zelevinsky's triangular basis contains all quantum
cluster monomials.

\subsection{Different triangular bases in monoidal categorification}

Inspired by this new approach of Berenstein-Zelevinsky, in \cite{Qin15},
in order to prove monoidal categorification conjectures of quantum
cluster algebras, the author introduced very different triangular
bases for injective-reachable quantum cluster algebras. For every
seeds $t$, we can define a such triangular bases $\can^{t}$, cf.
Section \ref{sub:Triangular-basis}. 

There are two crucial differences of the common triangular basis $\can$
in \cite{Qin15} with the basis $C$ of Berenstein-Zelevinsky:

\begin{enumerate}

\item The basis is unique but its existence cannot be guaranteed,
because Lusztig's lemma does not apply.

\item The expectation from Fock-Goncharov basis conjecture is included
in the definition and plays an important role.

\end{enumerate}

\subsection{Results}

We have two very different constructions of triangular bases. It is
desirable to compare these bases, which are both defined for acyclic
seeds. The main result of this paper claims that they are the same
for quantum cluster algebras arising from acyclic skew-symmetric matrices
(or, equivalently, from acyclic quivers).

\begin{Thm}[Main result]\label{thm:acyclic}

Let $\qClAlg$ be a quantum cluster algebras who has a seed $t$ with
an acyclic skew-symmetric matrix $B(t)$. Then in this seed, its triangular
basis $\can^{t}$ in \cite{Qin15} agrees with Berenstein-Zelevinsky's
triangular basis $C$.

\end{Thm}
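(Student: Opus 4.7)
The plan is to exploit the uniqueness characterization of Qin's triangular basis $\can^t$ and to verify that the Berenstein--Zelevinsky basis $C$ satisfies that characterization in the acyclic seed $t$. Since $\can^t$ is defined as the (necessarily unique) bar-invariant family indexed by the exchange lattice whose elements are pointed with prescribed leading degree and whose non-leading terms are strictly dominated in the dominance order on degrees, it suffices to exhibit each BZ basis element as satisfying these three properties and to match the indexings.

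I would begin by recalling the two constructions in parallel. In the seed $t = (x(t), B(t))$ with $B(t)$ acyclic, the BZ basis $C$ is obtained by applying Lusztig's lemma to the collection of \emph{acyclic standard monomials}, certain ordered products built out of the initial quantum cluster variables $x_i(t)$ and their adjacent partners $x_i^{*}(t) = \mu_i(x_i(t))$, with the ordering dictated by the acyclic orientation of $B(t)$. Bar-invariance and triangularity over the standard monomial basis (with respect to BZ's partial order) are automatic from Lusztig's lemma. On the other hand, $\can^t$ is indexed by elements of the exchange lattice and is characterized by pointedness together with a dominance bound on the remainder.

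The indexing match comes from the observation that, in an acyclic seed, both bases are naturally parametrized by the same subset of the exchange lattice: degrees of the form $g - [b]_+$ where $g$ lies in the positive span of initial $g$-vectors together with the $g$-vectors of the $x_i^{*}$, which are precisely the monomials appearing as acyclic standard monomials. I would check that the leading degree of each BZ standard monomial equals its Qin-index, so that each $C_\alpha$ automatically has the correct pointed leading term for $\can^t$.

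The main obstacle is upgrading BZ's lower-unitriangularity over standard monomials to Qin's dominance-order triangularity over pointed elements: a priori the two partial orders on the parametrizing set are different. Here the key technical step is to show that in an acyclic seed, every non-leading standard monomial appearing in the expansion of $C_\alpha$ can in turn be expanded as a positive integer combination of pointed elements whose degrees are strictly dominated by the degree of the leading monomial. This will follow by induction on the number of $x_i^{*}$ factors, using the explicit form of the exchange relation $x_i x_i^{*}$ and the fact that acyclicity forces the monomials produced by these relations to have $g$-vectors that are strictly smaller in the dominance order. Once this key lemma is in place, the three defining properties of $\can^t$ are verified for $C$, and uniqueness of the triangular basis in Qin's sense yields $C = \can^t$ in the seed $t$, proving Theorem~\ref{thm:acyclic}.
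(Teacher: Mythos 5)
Your overall strategy---verify that the BZ basis satisfies the defining properties of $\can^{t}$ and conclude by uniqueness---does not match the paper's argument, and as set up it has a genuine gap: the three properties you propose to check (bar-invariance, pointedness with prescribed leading degree, parametrization by the lattice) do \emph{not} characterize $\can^{t}$ uniquely. For instance, replacing a basis element $b$ by $b+(q^{\Hf}+q^{-\Hf})b'$ with $\deg^{t}b'\prec_{t}\deg^{t}b$ preserves all three properties. What makes Definition \ref{def:triangular_basis} rigid is the combination of bar-invariance with condition (i) (the basis contains the cluster monomials $[\prod_i X_{i}(t)^{u_{i}}]^{t}$ \emph{and} the monomials $[\prod_k I_{k}(t)^{v_{k}}]^{t}$) and, crucially, condition (iv): the products $[X_{i}(t)*S]^{t}$ decompose in the basis itself with lower-order coefficients in $\Bm$. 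You never address verifying (iv) for $C$, nor (i) for the $I_{k}(t)$-monomials, and verifying (iv) directly is essentially the whole difficulty; the paper only carries out such a direct axiom-check in the bipartite appendix (Theorem \ref{thm:bipartite}), where it works because $\overline{E}_{a}$ literally coincides with elements of the injective pointed set $\inj^{t}$ --- this fails for a general acyclic seed, and the fact that the skew-symmetrizable case is left as Conjecture \ref{conj:symmetrizable} signals that no such direct verification is available. Your ``key technical step'' is also shaky as stated: it is not true in general that the non-leading standard monomials $E_{a'}$ with $r(a')<r(a)$ have degrees $\prec_{t}$-dominated by $\deg^{t}E_{a}$; the paper only obtains $\prec_{t}$-unitriangularity of the particular finite combination via Lemma \ref{lem:has_triangular_order}, using pointedness of the combined element rather than term-by-term dominance, and in any case pointedness of each $C_{a}$ alone cannot feed a uniqueness argument.

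For comparison, the paper's proof runs in the opposite direction: it takes the existence of $\can^{t}$ as known input (Theorem \ref{thm:ayclic_triangular_basis}, via Kimura--Qin and dual canonical bases --- this is precisely why the skew-symmetric hypothesis appears), proves that adjacent acyclic seeds share the same triangular basis (Theorem \ref{thm:common_triangular_basis}), and then argues by induction on the rank: freezing a source vertex $n$, it shows each $\overline{E}_{a}$ is $(\prec_{t},\Bm)$-unitriangular to $\can^{t}$ (case $a_{n}\geq 0$ via Lemma \ref{lem:restricted_basis}; case $a_{n}<0$ via Lemmas \ref{lem:keep_pointed} and \ref{lem:substitution} in the mutated seed $t'$), after which bar-invariance of $C_{a}$ and of $\can^{t}$ forces $C_{a}\in\can^{t}$. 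None of these ingredients (existence via dual canonical bases, seed-independence, rank induction by freezing, the substitution lemma) appears in your sketch, so the proposal as written cannot be completed along the lines you indicate.
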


Notice that, for the quantum cluster algebra arising from an acyclic
quiver and $z$-coefficient pattern, its common triangular bases in
\cite{Qin15} is the dual canonical basis. Therefore, our main result
Theorem \ref{thm:acyclic} implies Conjecture \ref{conj:BZ_basis_good}.

Our proof is based on ideas and techniques developed by the author
in \cite{Qin15}, in particular, the maximal degree tracking and the
composition of unitriangular transitions. The triangular bases treated
in this paper are much easier than those in \cite{Qin15} and our
paper does not depend on the long proof there. In particular, we give
a self-contained proof that the triangular bases $\can^{t}$ in different
acyclic seeds $t$ are the same, cf. Theorem \ref{thm:common_triangular_basis}.

We could further propose the following natural conjecture.

\begin{Conj}\label{conj:symmetrizable}

The triangular basis $\can^{t}$ agrees with Berenstein-Zelevinsky's
triangular basis $C$ in seeds associated with acyclic skew-symmetrizable
seeds.

\end{Conj}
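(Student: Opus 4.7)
The plan is to exploit the uniqueness principle that governs Qin's triangular basis $\can^t$: once one knows that a bar-invariant basis with the prescribed leading-degree behaviour exists, it is forced to be unique. The strategy is therefore not to build $C$ from scratch, but to verify that Berenstein-Zelevinsky's basis $C$ satisfies the defining conditions of $\can^t$ in the acyclic seed $t$; uniqueness then yields $C=\can^t$.

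First I would put the two characterizations side by side. In \cite{Qin15}, $\can^t$ is the unique bar-invariant basis whose elements are indexed by their dominant $g$-vectors, each having a unique maximal degree with coefficient $1$ and all remaining terms strictly smaller in the dominance order. Berenstein--Zelevinsky's basis $C$ is the unique bar-invariant basis whose transition matrix from their ordered product basis (the PBW-like basis obtained from an acyclic orientation) is unitriangular with respect to a combinatorial order on exponent vectors. Thus both constructions have the same logical skeleton: bar-invariance plus unitriangularity with respect to a chosen standard basis; what differs is the standard basis and the partial order.

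The core technical step is a leading-degree computation for the ordered products used by Berenstein--Zelevinsky. Using the maximal-degree tracking technique from \cite{Qin15}, I would compute the dominant degree of each such ordered product in the acyclic seed $t$; the acyclicity of $B(t)$ is exactly what guarantees that these ordered products have well-defined, distinct leading degrees, indexed by the lattice parametrising $\can^t$. Once this identification is established, the BZ partial order on exponent vectors is seen to be refined by the dominance order on $g$-vectors, so that BZ's unitriangular transition is automatically unitriangular in Qin's sense. Combining this with bar-invariance, the uniqueness criterion of $\can^t$ forces $C = \can^t$ in the seed $t$. The self-contained proof of Theorem \ref{thm:common_triangular_basis} asserting that the $\can^t$ agree across acyclic seeds is handled by the same mechanism: explicit mutation formulas for quantum cluster variables yield a unitriangular relation between the bases in two adjacent acyclic seeds, and uniqueness compels them to coincide.

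The main obstacle I expect is the precise matching of indexing sets and orders, namely showing that the leading degrees of BZ's ordered products are exactly the dominant $g$-vectors indexing $\can^t$, and that the BZ order is compatible with dominance. This is a careful combinatorial bookkeeping problem; it becomes tractable because in the acyclic skew-symmetric case the acyclic orientation of the quiver gives an explicit linear ordering of mutable vertices, which synchronizes the two constructions. Once this leading-term dictionary is in place, the composition of unitriangular transitions collapses the problem to invoking uniqueness, and neither the long categorical machinery of \cite{Qin15} nor Lusztig's lemma beyond its original use in \cite{BerensteinZelevinsky2012} is needed.
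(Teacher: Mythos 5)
Your proposal misses that the statement you are asked to prove is a \emph{conjecture}, and the paper does not prove it. The general acyclic skew-symmetrizable case is left open; the paper establishes only the skew-symmetric acyclic case (Theorem~\ref{thm:acyclic}) and the bipartite skew-symmetrizable case (Theorem~\ref{thm:bipartite}). The critical obstruction to running your argument in the skew-symmetrizable setting is that the \emph{existence} of $\can^t$ is not known there: Theorem~\ref{thm:ayclic_triangular_basis} establishes existence by identifying the quantum cluster algebra with a quantum unipotent subgroup via \cite{GeissLeclercSchroeer11,KimuraQin14}, which requires an acyclic \emph{quiver}, i.e.\ a skew-symmetric matrix. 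Without an object $\can^t$ to compare $C$ against, the ``uniqueness forces equality'' step you propose has nothing to bite on. Your closing remark that the quiver orientation ``synchronizes the two constructions'' tacitly reintroduces the skew-symmetric hypothesis you are supposed to dispense with.

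Two further mismatches with the paper's actual methods are worth noting. First, your plan of directly verifying that $C$ satisfies the four defining conditions of $\can^t$ is precisely the route the paper takes \emph{only} in the bipartite case, where equation~\eqref{eq:rewrite_monomial} shows the standard monomials $\overline{E}_a$ literally coincide with the injective pointed set $\inj^t$; in a general acyclic seed the ordered products of $S_k$'s do \emph{not} coincide with $\inj^t$, and the paper instead uses induction on rank with vertex-freezing, the substitution lemma (Lemma~\ref{lem:substitution}), and preservation of pointedness under source mutation (Lemma~\ref{lem:keep_pointed}), concluding via bar-invariance that each $C_a$ lies in $\can^t$. Second, your assertion that the BZ partial order is refined by the dominance order is not the mechanism used: the paper passes between the two orders via Lemma~\ref{lem:has_triangular_order}, which needs only that a pointed element admits a \emph{finite} decomposition into pointed elements with coefficients in $\Bm$, not any order-refinement relation; this is exactly the content of Lemma~\ref{lem:change_BZ_order}.
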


In a previous private communication with Zelevinsky, the author pointed
out that for bipartite orientation, this conjecture is true. The details
will be given in the appendix, cf. Theorem \ref{thm:bipartite}.

\section*{Acknowledgments}

The author thanks Andrei Zelevinsky and Kyungyong Lee for conversations
on acyclic cluster algebras. He thanks Yoshiyuki Kimura, Qiaoling
Wei and Changjian Fu for remarks.

\section{Preliminaries}

\subsection{Quantum cluster algebras}

We recall the definition of quantum cluster algebras by \cite{BerensteinZelevinsky05}
and follow the convention in \cite{Qin15}. Let $[x]_{+}$ denote
$\max(x,0)$. Let $\tilde{B}$ be an $m\times n$ integer matrix with
$n\leq m$. Its $n\times n$ upper submatrix $B$ is called the principal
part. Assume that $\tilde{B}$ is of rank $n$ and $B$ skew-symmetrizable
(namely, there exists a diagonal matrix with strictly positive integer
diagonal entries such that its product with $B$ is skew-symmetric).
We can choose $\Lambda$ an $m\times m$ skew-symmetric integer matrix
such that $\tilde{B}^{T}\Lambda=(\begin{array}{cc}
D & 0\end{array})$ for some diagonal matrix $D$ with strictly positive integer diagonal
entries. Such a pair $(\tB,\Lambda)$ is called a compatible pair.

A quantum seed $t$ (or seed for simplicity) consists of a compatible
pair $(\tB(t),\Lambda(t))$ and a collection of indeterminate $X_{i}(t)$,
$1\leq i\leq m$, called $X$-variables. Let $\{e_{i}\}$ denote the
natural basis of $\Z^{m}$ and $X(t)^{e_{i}}=X_{i}(t)$. We define
the corresponding quantum torus $\cT(t)$ to be the Laurent polynomial
ring $\Z[q^{\pm\Hf}][X(t)^{g}]_{g\in\Z^{m}}$ with the usual addition
$+$, the usual multiplication $\cdot$, and the twisted product

\begin{align*}
X(t)^{g}*X(t)^{h} & =q^{\Hf\Lambda(t)(g,h)}X(t)^{g+h},
\end{align*}

where $\Lambda(t)(\ ,\ )$ denote the bilinear form on $\Z^{m}$ such
that 
\begin{align*}
\Lambda(t)(e_{i},e_{j}) & =\Lambda(t)_{ij}.
\end{align*}
$\cT(t)$ admits a bar-involution $\overline{(\ )}$ which is $\Z$-linear
such that 
\begin{align*}
\overline{q^{s}X(t)^{g}} & =q^{-s}X(t)^{g}.
\end{align*}

Notice that all Laurent monomials in $\cT(t)$ commute with each other
up to a $q$-power, which is called $q$-commute.

Let $b_{ij}$ denote the $(i,j)$-entry of $\tB(t)$. We define the
$Y$-variables to be the following Laurent monomials:

\begin{align*}
Y_{k}(t) & =X(t)^{\sum_{1\leq i\leq m}[b_{ik}]_{+}e_{i}-\sum_{1\leq j\leq m}[-b_{jk}]_{+}e_{j}}.
\end{align*}

For any direction $1\leq k\leq n$, the following operation (called
the mutation $\mu_{k}$) gives us a new seed $t'=\mu_{k}t=((X_{i}(t'))_{1\leq i\leq m},\tB(t'),\Lambda(t'))$:

\begin{itemize}

\item $X_{i}(t')=X_{i}(t)$ if $i\neq k$,

\item $X_{k}(t')=X(t)^{-e_{k}+\sum_{i}[b_{ik}]_{+}e_{i}}+X(t)^{-e_{k}+\sum_{j}[-b_{jk}]_{+}e_{j}}$,

\item $\tB(t')=(b_{ij}')$ is determined by $\tB(t)=(b_{ij})$:

$\begin{cases}
b'_{ik} & =-b_{ki}\\
b_{ij}' & =b_{ij}+[b_{ik}]_{+}[b_{kj}]_{+}-[-b_{ik}]_{+}[-b_{kj}]_{+}\qquad\mathrm{if}\ i,j\neq k
\end{cases}$

\item $\Lambda(t')$ is skew-symmetric and satisfies 
\begin{align*}
\begin{cases}
\Lambda(t')_{ij} & =\Lambda(t)_{ij}\qquad i,j\neq k\\
\Lambda(t')_{ik} & =\Lambda(t)(e_{i},-e_{k}+\sum_{j}[-b_{jk}]_{+}e_{j})\qquad i\neq k
\end{cases}
\end{align*}

\end{itemize}

The quantum torus $\cT(t')$ for the new seed $t'$ is defined similarly.
Notice that, by \cite[Proposition 6.2]{BerensteinZelevinsky05}, any
$Z\in\cT(t)\cap\cT(t')$ is bar-invariant in $\cT(t)$ if and only
if it is bar-invariant in $\cT(t')$.

We define a quantum cluster algebra $\qClAlg$ as the following:

\begin{itemize}

\item Choose an initial seed $t_{0}=((X_{1},\cdots,X_{m}),\tB,\Lambda)$.

\item All the seeds $t$ are obtained from $t_{0}$ by iterated mutations
at directions $1\leq k\leq n$.

\item $\qClAlg=\Z[q^{\pm\Hf}][X_{n+1}^{-1},\cdots,X_{m}^{-1}][X_{i}(t)]_{t,1\leq i\leq m}.$

\end{itemize}

The $X$-variables $X_{i}(t)$ in the seeds are called the quantum
cluster variables. We call $X_{n+1},\ldots,X_{m}$ the frozen variables
or the coefficients.

The correction technique developed in \cite[Section 9]{Qin12} provides
a convenient tool for studying the bases of $\qClAlg$, cf. \cite[Section 5]{Qin15}
for a summary. It tells us that most phenomenons and properties of
bases keep unchanged when we change the coefficient part of the seed
$t$, namely the lower $(m-n)\times n$ submatrix $B^{c}(t)$ of $\tB(t)$,
or when we change $\Lambda(t)$.

Finally, notice that to each rank $n$ quiver $Q$, we can associate
an $n\times n$ skew-symmetric matrix $B$ such that its entry $b_{ij}$
is given by the difference of the number of arrows from $i$ to $j$
with that of $j$ to $i$. All skew-symmetric matrices arise in this
way. So, if the matrix $B(t)$ of a seed $t$ is skew-symmetric, we
say $t$ is skew-symmetric or $t$ arises from a quiver; if $B(t)$
is skew-symmetrizable, we say $t$ is skew-symmetrizable.

\subsection{Triangular basis\label{sub:Triangular-basis}}

Choose any seed $t$. We recall the following notions introduced in
\cite[Section 3.1]{Qin15}

\begin{Def}[Pointed elements and normalization]A Laurent polynomial
$Z$ in the quantum torus $\cT(t)$ is said to be pointed if it takes
the form

\begin{align}
Z & =X(t)^{g}\cdot(1+\sum_{0\neq v\in\N^{n}}c_{v}Y(t)^{v}),\label{eq:pointed}
\end{align}

for some coefficients $c_{v}\in\Z[q^{\pm\Hf}]$. 

In this case, $Z$ is said to be pointed at degree $g$, and we denote
$\deg^{t}Z=g$.

If $Z=q^{s}X(t)^{g}(1+\sum_{0\neq v\in\N^{n}}c_{v}Y(t)^{v})$ for
some $s\in\frac{\Z}{2}$, we use $[Z]^{t}$ to denote the pointed
element $q^{-s}Z$ and call it the normalization of $Z$ in $\cT(t)$.

\end{Def}

Notice that all the quantum cluster variables are pointed.

In order to say that a pointed element has a unique maximal degree,
we need to introduce the following partial order.

\begin{Def}[Degree lattice and dominance order]

We call $\Z^{m}$ the degree lattice and denote it by $\degL(t)$.
Its dominance order $\prec_{t}$ is defined to be the partial order
such that $g'\prec_{t}g$ if and only if $g'=g+\deg^{t}Y(t)^{v}$
for some $0\neq v\in\N^{n}$.

\end{Def}

We might omit the symbol $t$ in $X_{i}(t)$, $I_{k}(t)$,$\prec_{t}$,
$\deg^{t}$ or $[\ ]^{t}$ for simplicity.

\begin{Lem}[{\cite{Qin15}[Lemma 3.1.2]}]\label{lem:finite_interval}

For any $g'\preceq_{t}g$ in $\Z^{m}$, there exists finitely many
$g''\in\Z^{m}$ such that $g'\preceq_{t}g''\preceq_{t}g$.

\end{Lem}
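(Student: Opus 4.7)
The plan is to unwind the definitions and observe that the assumption $\rank\tB = n$ makes the map $v\mapsto\tB v$ injective, so an interval in $\prec_t$ is in bijection with a box of lattice points in $\N^n$.

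First I would translate the dominance relation into vector form. Since $Y_k(t) = X(t)^{\sum_i [b_{ik}]_+ e_i - \sum_j [-b_{jk}]_+ e_j}$ and $[b_{ik}]_+ - [-b_{ik}]_+ = b_{ik}$, the degree $\deg^t Y_k(t)$ is exactly the $k$-th column $b_k$ of $\tB(t)$. Hence for $v=(v_1,\dots,v_n)\in\N^n$, we have $\deg^t Y(t)^v = \tB(t)\,v$, and the relation $h \preceq_t g$ is equivalent to the existence of some $v\in\N^n$ with $h = g + \tB(t)\,v$.

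Next I would use the rank assumption. By hypothesis $\tB(t)$ has rank $n$, so the $\Z$-linear map $\N^n \hookrightarrow \Z^n \xra{\tB(t)} \Z^m$ is injective. Consequently, if $g' = g + \tB(t)\,v$ for some (necessarily unique) $v\in\N^n$, then every $g''$ with $g'\preceq_t g''\preceq_t g$ corresponds to a unique decomposition $v = v_1+v_2$ with $v_1,v_2\in\N^n$ (from $g'' = g + \tB(t)\,v_1$ and $g' = g'' + \tB(t)\,v_2$), and conversely every such decomposition produces a distinct $g''$.

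Finally I would count: the set of $v_1\in\N^n$ with $v-v_1\in\N^n$ is the box $\prod_{k=1}^n\{0,1,\dots,v_k\}$, which has $\prod_{k=1}^n (v_k+1) < \infty$ elements. This bounds the cardinality of the interval $[g',g]$ and completes the proof. There is no real obstacle here; the only point requiring care is invoking injectivity of $\tB(t)$ to ensure distinct $v_1$'s yield distinct intermediate degrees $g''$, which is precisely why the rank hypothesis on $\tB$ is built into the definition of a compatible pair.
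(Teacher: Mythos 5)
Your proof is correct, and it is the standard argument for this lemma: identify $\deg^t Y_k(t)$ with the $k$-th column $b_k$ of $\tB(t)$, so that $g' \preceq_t g$ unwinds to $g' = g + \tB(t)v$ for a unique $v\in\N^n$ (uniqueness from $\rank\tB(t) = n$), and then the interval $[g',g]$ is parametrized by the finite box $\{v_1\in\N^n : v_1\le v\ \text{componentwise}\}$. The paper cites this from \cite{Qin15}~[Lemma~3.1.2] without reproducing the proof, but your argument is precisely the intended one; no gap.
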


Assume that, in $\cT(t)$, we have (possibly infinitely many) elements
$\gen_{j}$ pointed in different degrees. Let we denote $\gen_{j}=\sum_{g\in\Z^{m}}c_{g;j}X^{g}$
where $c_{g;j}\in\Z[q^{\pm\Hf}]$. A linear combination $\sum_{j}a_{j}\gen_{j}$
with $a_{j}\in\Z[q^{\pm\Hf}]$ is well defined and contained in $\cT(t)$
if $\sum_{j}a_{j}c_{g;j}$ is a finite sum for all $g\in\Z^{m}$ and
vanishes except for finitely many $g$.

Assume that $Z$ be a Laurent polynomial in $\cT(t)$ such that it
is a well defined linear combination of $\gen_{j}$:

\begin{align}
Z & =\sum_{j}a_{j}\gen_{j},\qquad a_{j}\in\Z[q^{\pm\Hf}].\label{eq:decomposition}
\end{align}

We say that this decomposition $\prec_{t}$-triangular if there exists
a unique $\prec_{t}$-maximal element $\deg^{t}\gen_{0}$ in $\{\deg^{t}\gen_{j}\}$.
It is further called $\prec_{t}$-unitriangular if $a_{0}=1$, or
$(\prec_{t},\Bm)$-triangular if $a_{j}\in\Bm=q^{-\Hf}\Z[q^{-\Hf}]$
for $j\neq0$. A set $\{Z\}$ is said to be $(\prec_{t},\Bm)$-unitriangular
to $\{\gen_{j}\}$ if all its elements $Z$ has such property.

\begin{Lem}[{\cite{Qin15}[Lemma 3.1.9]}]

If the decomposition \eqref{eq:decomposition} is $\prec_{t}$-triangular,
then it is the unique $\prec_{t}$-triangular decomposition of $Z$
in $\{\gen_{j}\}$.

\end{Lem}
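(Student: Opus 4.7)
The plan is to assume two $\prec_t$-triangular decompositions $Z = \sum_j a_j \gen_j = \sum_j a'_j \gen_j$ and show that $b_j := a_j - a'_j$ vanishes for every $j$. Setting $S := \{j : b_j \neq 0\}$, I first observe that $\sum_j b_j \gen_j = 0$ is a well-defined relation in $\cT(t)$: for each $h \in \Z^m$ the coefficient of $X(t)^h$ on the left is the difference of two finite $\Z[q^{\pm\Hf}]$-sums, hence still a finite sum, and it vanishes by the assumed equality of the two decompositions. The goal is to derive a contradiction from $S \neq \emptyset$.

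The core step is to produce a $\prec_t$-maximal element of $\{\deg^t \gen_j : j \in S\}$; this is the delicate point, since $S$ may be infinite and partial orders on infinite sets need not admit maximal elements in general. Let $g_0$ and $g'_0$ be the respective maximal degrees of the two triangular decompositions. Every $j \in S$ has $a_j \neq 0$ or $a'_j \neq 0$, so $\deg^t \gen_j \preceq_t g_0$ or $\deg^t \gen_j \preceq_t g'_0$. Fix any $j_* \in S$; then the subset $S' := \{j \in S : \deg^t \gen_j \succeq_t \deg^t \gen_{j_*}\}$ has its image under $\deg^t$ contained in $[\deg^t \gen_{j_*},g_0] \cup [\deg^t \gen_{j_*},g'_0]$, which is finite by Lemma \ref{lem:finite_interval}. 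Because the $\gen_j$ are pointed in pairwise distinct degrees, $S'$ itself is finite, so it contains an index $k_0$ whose degree is $\prec_t$-maximal in $\{\deg^t \gen_j : j \in S'\}$; any $j \in S$ with $\deg^t \gen_j \succ_t \deg^t \gen_{k_0}$ would automatically lie in $S'$, so $\deg^t \gen_{k_0}$ is in fact $\prec_t$-maximal in all of $\{\deg^t \gen_j : j \in S\}$.

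To finish, I would extract the coefficient of the Laurent monomial $X(t)^{\deg^t \gen_{k_0}}$ from both sides of $\sum_j b_j \gen_j = 0$. By pointedness, each $\gen_j$ is a $\Z[q^{\pm\Hf}]$-linear combination of monomials $X(t)^h$ with $h \preceq_t \deg^t \gen_j$, the leading term $h = \deg^t \gen_j$ occurring with coefficient $1$. Hence $X(t)^{\deg^t \gen_{k_0}}$ contributes to $\gen_j$ only when $\deg^t \gen_{k_0} \preceq_t \deg^t \gen_j$; combined with the maximality of $\deg^t \gen_{k_0}$ inside $S$ and the distinctness of degrees, the only such $j \in S$ is $k_0$ itself, contributing $b_{k_0} \cdot 1$. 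The extracted identity therefore reads $b_{k_0} = 0$, contradicting $k_0 \in S$, and the lemma follows. The hardest step is the middle one, where the possibly infinite index set $S$ must be shown to contain a $\prec_t$-maximal element; Lemma \ref{lem:finite_interval} is precisely the tool that makes this possible, by trapping the relevant upper cone inside a finite interval.
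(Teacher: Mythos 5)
Your argument is correct and is essentially the paper's proof in expanded form: the paper recursively determines the coefficients from the top of the dominance order using Lemma \ref{lem:finite_interval}, while you phrase the same mechanism as a difference-of-two-decompositions argument, using Lemma \ref{lem:finite_interval} to locate a $\prec_{t}$-maximal degree in the (possibly infinite) support and then extracting the leading Laurent coefficient there. Your careful justification that the support contains a maximal element is exactly the detail the paper's one-line recursion leaves implicit, so nothing is missing.
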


\begin{proof}

Thanks to Lemma \ref{lem:finite_interval}, we can recursively determine
all the coefficients $a_{j}$ of $\gen_{j}$ in \eqref{eq:decomposition},
starting from the higher $\prec_{t}$-order Laurent degrees, cf. \cite{Qin15}[Remark 3.1.8].

\end{proof}

The following lemma will be useful. It allows us to switch to the
desired dominance order.

\begin{Lem}[{\cite{Qin15}[Lemma 3.1.9]}]\label{lem:has_triangular_order}

(i) If \eqref{eq:decomposition} is a finite decomposition of a pointed
element $Z$, then it is $\prec_{t}$-unitriangular.

(ii) If, further, all but one coefficients in \eqref{eq:decomposition}
belong to $\Bm$, then \eqref{eq:decomposition} is $(\prec_{t},\Bm)$-unitriangular.

\end{Lem}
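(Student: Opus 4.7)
The plan is to exploit the assumption that the elements $\gen_{j}$ are pointed at \emph{distinct} degrees, so that extracting the coefficient of a given Laurent monomial on the right-hand side of \eqref{eq:decomposition} reduces, degree by degree, to picking out a single $a_{j}$. Because the decomposition is finite, the set $S=\{\deg^{t}\gen_{j}:a_{j}\neq 0\}$ is finite and hence admits $\prec_{t}$-maximal elements, which is where I would start the bookkeeping.

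For part (i), I would fix any $\prec_{t}$-maximal element $d=\deg^{t}\gen_{j_{0}}$ in $S$ and read off the coefficient of $X(t)^{d}$ on both sides. Every monomial appearing in $\gen_{j}$ has degree $\preceq_{t}\deg^{t}\gen_{j}$, so only those $j$ with $\deg^{t}\gen_{j}\succeq_{t}d$ can contribute to the coefficient of $X(t)^{d}$ in $\sum_{j}a_{j}\gen_{j}$; combining maximality of $d$ in $S$ with the distinctness of the $\deg^{t}\gen_{j}$ forces $j=j_{0}$, and the pointedness of $\gen_{j_{0}}$ (leading coefficient $1$) yields that this coefficient is exactly $a_{j_{0}}\neq 0$. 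Since $Z$ itself is pointed at $g$, all non-zero monomials of $Z$ have degrees $\preceq_{t}g$, so $d\preceq_{t}g$. If the inequality were strict, then reading off the coefficient of $X(t)^{g}$ in $Z$, which equals $1$, would require some $\gen_{j}$ with $a_{j}\neq 0$ to satisfy $\deg^{t}\gen_{j}\succeq_{t}g\succ_{t}d$, contradicting the maximality of $d$ in $S$. Hence $d=g$ and $a_{j_{0}}=1$; the uniqueness of the $\prec_{t}$-maximal element then follows at once from the distinctness of the degrees $\deg^{t}\gen_{j}$.

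For part (ii), I will use that the leading coefficient $a_{j_{0}}=1$ identified in (i) does not lie in $\Bm=q^{-\Hf}\Z[q^{-\Hf}]$. The hypothesis of (ii) asserts that exactly one of the $a_{j}$ lies outside $\Bm$, so that exceptional coefficient is forced to be $a_{j_{0}}$, and all remaining $a_{j}$ must therefore lie in $\Bm$. This is precisely the $(\prec_{t},\Bm)$-unitriangular condition.

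The one point that I expect to require some care is the coefficient-extraction step in part (i): it is the distinctness of the $\deg^{t}\gen_{j}$ that prevents cancellations among several summands sharing a common leading degree from spoiling the conclusion $a_{j_{0}}\neq 0$. The finiteness assumption only enters to ensure that $S$ has maximal elements at all; in an infinite setting one would instead invoke Lemma \ref{lem:finite_interval} to run the same argument inductively from the top of the dominance order.
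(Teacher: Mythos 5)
Your proposal is correct and follows essentially the same route as the paper's own proof: the paper also compares the unique maximal degree of the pointed element $Z$ with the leading degrees on the right-hand side of the finite decomposition to identify $\deg\gen_{0}=\deg Z$ with coefficient $a_{0}=1$, and deduces (ii) directly from (i) since $1\notin\Bm$. You have simply written out the coefficient-extraction bookkeeping (using the distinctness of the $\deg^{t}\gen_{j}$ and the finiteness of $S$) that the paper compresses into "compare maximal degrees of both hand sides."
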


\begin{proof}

(i) We recall the proof in \cite{Qin15}[Lemma 3.1.9]. Compare maximal
degrees of both hand sides of a finite decomposition, we obtain that
the finite set $\{\deg\gen_{j}\}$ contains a unique maximal element
$\deg\gen_{0}$ for some $\gen_{0}$ such that $\deg\gen_{0}=\deg Z$.
So this decomposition is $\prec_{t}$-triangular. Finally, $a_{0}=1$
because $Z$ has coefficient $1$ in its leading degree.

(ii) By (i), $Z$ admits a $\prec_{t}$-unitriangular decomposition.
The hypothesis in (ii) simply tells us that the coefficients other
than the leading coefficient (equals 1) belong to $\Bm$.

\end{proof}

For any $1\leq k\leq n$, let $I_{k}(t)$ denote\footnote{We use the notation $I_{k}$ because this cluster variable corresponds
to the $k$-th indecomposable injective module of a quiver with potential
\cite{DerksenWeymanZelevinsky08,DerksenWeymanZelevinsky09}.} the unique quantum cluster variable (if it exists) such that $\mathrm{pr}_{n}\deg^{t}I_{k}(t)=-e_{k}$,
where $\pr_{n}$ is the projection of $\Z^{m}$ onto the first $n$-components.
The quantum cluster algebra $\qClAlg$ is said to be \textit{injective
reachable} if $I_{k}(t)$ exists for any $1\leq k\leq n$. This property
is independent of the choice of the seed $t$ by \cite{Plamondon10a}\cite{gross2014canonical}.
In this case, the quantum cluster variables $I_{k}(t)$, $1\leq k\leq n$,
$q$-commute with each other because they belong to the same seed
(denoted by $t[1]$ in \cite{Qin15}).

\begin{Rem}\label{rem:acyclic_injectives}

In the convention of Section \ref{sub:BZ-basis}, if $B(t)$ is acyclic,
we can obtain the quantum cluster variables $I_{k}$, $\forall1\leq k\leq n$,
by applying the sequence of mutations on each vertex $1,\cdots,n$
such that the their order increases with respect to $\triangleleft$.
In particular, the corresponding cluster algebra is injective reachable.
See Example \ref{eg:Kronecker} for an explicit calculation.

\end{Rem}

\begin{Def}[Triangular basis {\cite[Definition 6.1.1]{Qin15}}]\label{def:triangular_basis}

The triangular basis $\can^{t}$ for the seed $t$ is defined to be
the basis of the quantum cluster algebra $\qClAlg$ such that

\begin{itemize}

\item The quantum cluster monomials $[\prod_{1\leq i\leq m}X_i(t)^{u_i}]^t$,$[\prod_{1\leq k\leq n}I_{k}(t)^{v_k}]^t$ belong to $\can^{t}$, $\forall u_i,v_k\in\N$.

\item (bar-invariance) The basis elements are invariant under the
bar involution in $\cT(t)$.

\item (parametrization) The basis elements are pointed, and we have
the bijection 
\begin{align*}
\deg^{t}:\can^{t} & \simeq\degL(t)=\Z^{m}.
\end{align*}

\item (triangularity) For any $X_{i}(t)$ and $S\in\can^{t}$, we
have\\
\begin{align*}
[X_{i}(t)*S]^{t} & =b+\sum c_{b'}\cdot b',
\end{align*}
where $\deg^{t}b'\prec_{t}\deg^{t}b=\deg^{t}X_{i}(t)+\deg^{t}S$ and
the coefficients $c_{b'}\in\Bm=q^{-\Hf}\Z[q^{-\Hf}]$.

\end{itemize}

\end{Def}

It is easy to show that if $\can^{t}$ exists, then it is unique by
the triangularity and bar-invariance, cf \cite[Lemma 6.2.6(i)]{Qin15}.
In order to study $\can^{t}$, \cite{Qin15} introduced the injective
pointed set $\inj^{t}$ in the seed $t$:

\begin{align*}
\inj^{t} & =\{\inj^{t}(f,u,v)|f\in\Z^{[n+1,m]},\ u,\ v\in\N^{[1,n]},u_{k}v_{k}=0\forall k\in[1,n]\}\\
\inj^{t}(f,u,v) & =[\prod_{n+1\leq i\leq m}X_{i}(t)^{f_{i}}*\prod_{1\leq k\leq m}X_{k}(t)^{u_{k}}*\prod_{1\leq k\leq m}I_{k}(t)^{v_{k}}]^{t}
\end{align*}

This is a linearly independent family of pointed elements contained
in $\qClAlg$. By the triangularity of $\can^{t}$, the set of pointed
elements $\inj^{t}$ is $(\prec_{t},\Bm)$-unitriangular to $\can^{t}$.
It follows that $\can^{t}$ is also $(\prec_{t},\Bm)$-unitriangular
to $\inj^{t}$, cf. \cite[Lemma(inverse transition)]{Qin15}.

\begin{Eg}[Type $A_3$]\label{eg:A_3_triangular}

Consider the matrix $\tB=\left(\begin{array}{ccc}
0 & -1 & 0\\
1 & 0 & -1\\
0 & 1 & 0\\
-1 & 1 & 0\\
0 & -1 & 1\\
0 & 0 & -1
\end{array}\right)$, which is the matrix of the ice quiver in Figure \ref{fig:A3Quiver}.

In the convention of \cite{KimuraQin14}, its principal part is an
acyclic type $A_{3}$ quiver and coefficient part the $z$-pattern.
There is a natural matrix $\Lambda$ such that $(\tB,\Lambda)$ is
compatible. The corresponding quantum cluster algebra $\qClAlg$ is
isomorphic to the quantum unipotent subgroup $A_{q}(\mathfrak{\mathfrak{n}}(c^{2}))$
localized at the coefficients $X_{4},X_{5},X_{6}$, where the Coxeter
word $c=s_{3}s_{2}s_{1}$ (read from right to left).

The quantum cluster variables $I_{1},I_{2},I_{3}$ are obtained from
consecutive mutations at $1,2,3$. Our pointed element $\inj(f,u,v)$ 

\begin{align*}
\inj(f,u,v) & =[X_{4}^{f_{4}}*X_{5}^{f_{5}}*X_{6}^{f_{6}}*X_{1}^{u_{1}}*X_{2}^{u_{2}}*X_{3}^{u_{3}}*I_{1}^{v_{1}}*I_{2}^{v_{2}}*I_{3}^{v_{3}}]
\end{align*}

is a localized dual PBW basis element (rescaled by a $q$-power),
and the triangular basis is the localized (rescaled) dual canonical
basis, cf. \cite{KimuraQin14}.

\end{Eg}

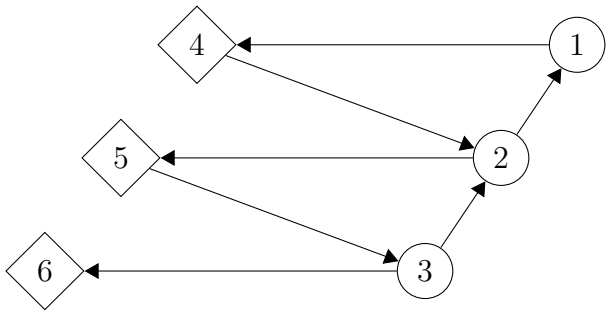
\begin{figure}[htb!]  \centering \beginpgfgraphicnamed{fig:A3Quiver}   \begin{tikzpicture}     \node [shape=circle, draw] (v1) at (1,-3) {3};      \node  [shape=circle, draw] (v2) at (2,-1.5) {2};      \node [shape=circle,  draw] (v3) at (3,0) {1};
\node [shape=diamond, draw] (v4) at (-4,-3) {6};      \node  [shape=diamond, draw] (v5) at (-3,-1.5) {5};      \node [shape=diamond,  draw] (v6) at (-2,0) {4};
    \draw[-triangle 60] (v1) edge (v2);      \draw[-triangle 60] (v2) edge (v3);         \draw[-triangle 60] (v5) edge (v1);          \draw[-triangle 60] (v6) edge (v2);           \draw[-triangle 60] (v1) edge (v4);      \draw[-triangle 60] (v2) edge (v5);      \draw[-triangle 60] (v3) edge (v6);            \end{tikzpicture} \endpgfgraphicnamed \caption{Acyclic A3 quiver with $z$-pattern} \label{fig:A3Quiver} \end{figure}

\begin{Lem}[{Substitution \cite{Qin15}[Lemma 6.4.4]}]\label{lem:substitution}

If a pointed element $Z$ is $(\prec_{t},\Bm)$-unitriangular to $\can^{t}$,
so does $[\prod_{n+1\leq i\leq m}X^{f_{i}}*X^{u}*Z*I^{v}]$ for any
$f\in\Z^{[n+1,m]}$,$u,v\in\N^{n}$.

\end{Lem}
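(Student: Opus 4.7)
The plan is to reduce the substitution to single-factor multiplications and iterate. First I would write the assumed decomposition $Z=S_0+\sum_{S\neq S_0}a_S\, S$ with $S_0,S\in\can^t$, $\deg^t S_0=\deg^t Z$, and coefficients $a_S\in\Bm$. By the associativity $[A*B*Z]^t=[A*[B*Z]^t]^t$ (the $q$-power adjustments cancel under normalization), it suffices to show that each of $[X_i(t)*Z]^t$ for $1\leq i\leq m$, $[Z*I_k(t)]^t$ for $1\leq k\leq n$, and $[X_j^{-1}(t)*Z]^t$ for $n+1\leq j\leq m$ is again $(\prec_t,\Bm)$-unitriangular to $\can^t$; the general product is then obtained by iteration.

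For the principal case of left multiplication by $X_i(t)$, I would normalize each factor to obtain
\begin{align*}
[X_i(t)*Z]^t=\sum_S a_S\,q^{\Hf\Lambda(t)(e_i,\deg^t S-\deg^t Z)}\,[X_i(t)*S]^t,
\end{align*}
and then apply the triangularity axiom $[X_i(t)*S]^t=b_S+\sum_{b'\prec_t\deg^t b_S} c_{b',S}\,b'$ with $c_{b',S}\in\Bm$ to each summand. The leading contribution comes from $S=S_0$ and gives the basis element $b_{S_0}$ of degree $e_i+\deg^t Z$ with coefficient $1$, while every other contribution has a strictly $\prec_t$-smaller degree. The crucial technical step is to verify that the $q$-power factor keeps all non-leading coefficients in $\Bm$: writing $\deg^t S-\deg^t Z=\deg^t Y(t)^v$ for some $0\neq v\in\N^n$, the compatibility $\tB^T\Lambda=(D,0)$ yields $\Lambda(t)(e_i,\deg^t Y(t)^v)=-d_i v_i$ for $i\in[1,n]$ and $=0$ for $i$ frozen. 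In either case $\Hf\Lambda(t)(e_i,\deg^t S-\deg^t Z)\leq 0$, so the $q$-power lies in $\Z[q^{-\Hf}]$ and $a_S\cdot q^{\Hf\Lambda(t)(e_i,\deg^t S-\deg^t Z)}\in\Bm$. Closure of $\Bm$ under addition and under multiplication by $\Z[q^{-\Hf}]$ and by $\Bm$ itself then gives the desired $(\prec_t,\Bm)$-unitriangularity after combining like terms.

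For right multiplication by $I_k(t)$ I would run the symmetric argument, appealing to the analogous right-triangularity of $\can^t$ with respect to $I_k(t)$; this is a companion property coming from the fact that $I_k(t)$ is an initial cluster variable in the shifted seed $t[1]$, in which $\can^t$ again serves as the triangular basis. For frozen shifts $X_j^{\pm1}(t)$ the $q$-power adjustment vanishes because $\Lambda(t)(e_j,\deg^t Y_k)=0$ by compatibility, and the multiplication acts by a pure frozen degree shift that maps $\can^t$ onto itself, so the statement is immediate. Iterating these three single-factor steps in the order prescribed by the product yields the general assertion. The main obstacle is the $q$-power estimate in the principal step, which is resolved entirely by the compatibility of $(\tB,\Lambda)$; a secondary subtlety is the right-triangularity for $I_k$, which I invoke via the companion analysis carried out in the seed $t[1]$.
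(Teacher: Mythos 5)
Your treatment of left multiplication by $X_i(t)$ is correct, and it in fact supplies the detail that the paper delegates to a citation: the estimate $\Lambda(t)(e_i,\deg^{t}Y(t)^{v})=-d_i v_i\leq 0$ for $i\in[1,n]$ (and $=0$ for frozen $i$), coming from $\tB^{T}\Lambda=(D\ \ 0)$, is exactly the ``comparison of $q$-powers'' invoked there. The genuine gap is the right multiplication by $I_k(t)$. The triangularity axiom in Definition \ref{def:triangular_basis} concerns only left multiplication by the variables $X_i(t)$; no ``right-triangularity with respect to $I_k(t)$'' is available, and your justification --- that $\can^{t}$ ``again serves as the triangular basis'' in the shifted seed $t[1]$ --- cannot be invoked here. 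Within this paper the only statement comparing triangular bases of different seeds is Theorem \ref{thm:common_triangular_basis}, which treats a single mutation between acyclic seeds and whose proof itself relies on the present substitution lemma, so the appeal is circular; the general assertion that $\can^{t}$ is also triangular for $t[1]$ belongs to the long argument of \cite{Qin15} that this paper is explicitly written to avoid.

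The missing idea is the one the paper's proof is built on: do not decompose $Z$ in $\can^{t}$, but re-expand it in the injective pointed set $\inj^{t}$, which is legitimate because $\can^{t}$ is $(\prec_{t},\Bm)$-unitriangular to $\inj^{t}$. Substituting this expansion into $[\prod_{n+1\leq i\leq m}X^{f_{i}}*X^{u}*Z*I^{v}]$, every term becomes, up to a controlled $q$-power (frozen variables $q$-commute with pointed elements, the $X_k$ among themselves and the $I_k$ among themselves $q$-commute), an ordered product $[\prod_{i>n}X^{f'_{i}}*X^{u'}*I^{v'}]$ in which all $I$-factors already sit on the right and merge into a single quantum cluster monomial of $t[1]$ --- a basis element of $\can^{t}$ by definition, even when $u'_k v'_k\neq 0$ so that the product does not lie in $\inj^{t}$. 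From that point only left multiplications by cluster variables and frozen monomials remain, and your $q$-power estimate together with the triangularity axiom finishes the argument. Your single-factor statement $[S*I_k(t)]^{t}$ for $S\in\can^{t}$ can itself be proved this way, but then it is obtained by the paper's mechanism rather than by a symmetric companion of the left-multiplication case, so as written your proposal does not close.
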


\begin{proof}

$Z$ is $(\prec_{t},\Bm)$-unitriangular to $\inj^{t}$ and admits
a $(\prec_{t},\Bm)$-unitriangular decomposition

\begin{align*}
Z & =\sum_{s}a_{s}\inj^{t}(f^{(s)},u^{(s)},v{}^{(s)}).
\end{align*}

Replace $Z$ by this decomposition in $[\prod_{n+1\leq i\leq m}X^{f_{i}}*X^{u}*Z*I^{v}]$,
the result is $(\prec_{t},\Bm)$-unitriangular to $\can^{t}$, by
the triangularity of $\can^{t}$ and comparison of $q$-powers (cf.
\cite[Lemma 6.2.4]{Qin15}).

\end{proof}

\subsection{Berenstein-Zelevinsky's triangular basis\label{sub:BZ-basis}}

Work in some chosen seed $t$, whose symbol we often omit. Assume
that its principal part $B=B(t)$ is acyclic, namely, there exists
an order $\triangleleft$ on the vertex $\{1,\ldots,n\}$ such that
$b_{ij}\leq0$ whenever $i\triangleleft j$. In this case, $t$ is
called an acyclic seed. If $i\triangleleft j$, we say $i$ is $\triangleleft$-inferior
than $j$, and also denote $j\triangleright i$.

A vertex $j\in[1,n]$ is said to be a source point in $t$ if $j$
is $\triangleleft$-maximal, namely, $j\triangleright k$ for all
$1\leq k\leq n$. Similarly, it is called a sink point in $t$ if
$j$ is $\triangleleft$-minimal, namely, $j\triangleleft k$ for
all $1\leq k\leq n$.

For any $1\leq k\leq n$, let $b_{k}=\tilde{B}e_{k}$ denote the $k$-th
column of $\tilde{B}$. Let $S_{k}=S_{k}(t)$ denote\footnote{We use the symbol $S_{k}$ because this cluster variable corresponds
to the $k$-th simple $S_{k}$ in an associated quiver with potential.} the quantum cluster variable $X_{k}(\mu_{k}t)$. Notice that $S_{k}=X^{-e_{k}+[-b_{k}]_{+}}\cdot(1+Y_{k})$
and we have $\deg S_{k}=-e_{k}+[-b_{k}]_{+}$, where $[-b_{k}]_{+}$
denote $([-b_{jk}]_{+})_{1\leq j\le m}$.

For any $a\in\Z^{m}$, Bernstein and Zelevinsky defined the standard
monomials

$E_{a}=[\prod_{n<j\leq m}X^{a_{j}}*\prod_{1\leq k\leq n}X_{k}^{[a_{k}]_{+}}*\prod_{1\leq k\leq n}^{\triangleleft}S_{k}^{[-a_{k}]_{+}}]$,

where the last factor is the product with increasing $\triangleleft$
order, cf. \cite[(1.17) (1.22) Remark 1.3]{BerensteinZelevinsky2012}.

Define $r(a)=\sum_{1\leq k\leq n}[-a_{k}]_{+}$. Define partial order
$a\prec_{BZ}a'$ if and only if $r(a)<r(a')$.

\begin{Def}

The Berenstein-Zelevinsky's acyclic triangular basis for the seed
$t$ is defined to be the basis $C^{t}=\{C_{a}\}$ of $\qClAlg$ such
that each $C_{a}$ is bar-invariant and $(\prec_{BZ},q^{\Hf}\Z[q^{\Hf}])$-triangular
to the basis $\{E_{a}\}$. 

\end{Def}

We call $C^{t}$ the BZ-basis for simplicity. Applying the bar involution,
we obtain that $C^{t}$ is $(\prec_{BZ},\Bm)$-triangular to $\{\overline{E}_{a}\}$,
where 
\begin{align*}
\overline{E}_{a} & =[\prod_{1\leq k\leq n}^{\triangleright}S_{k}^{[-a_{k}]_{+}}*\prod_{1\leq k\leq n}X_{k}^{[a_{k}]_{+}}*\prod_{n<j\leq m}X^{a_{j}}]
\end{align*}

where the first factor is the product with decreasing $\triangleleft$
order.

\begin{Eg}

Let us continue Example \ref{eg:A_3_triangular}. The standard monomials,
after the bar involution, gives us 
\begin{align*}
\overline{E}_{a} & =[S_{3}^{[-a_{3}]_{+}}*S_{2}^{[-a_{2}]_{+}}*S_{1}^{[-a_{1}]_{+}}*X_{1}^{[a_{1}]_{+}}*X_{2}^{[a_{2}]_{+}}*X_{3}^{[a_{3}]_{+}}**X_{4}^{a_{4}}*X_{5}^{a_{5}}*X_{6}^{a_{6}}].
\end{align*}

Notice that $X_{4},X_{5},X_{6}$ $q$-commute with all the factors.

\end{Eg}

\begin{Thm}[{\cite{BerensteinZelevinsky12}[Theorem 1.4]}]

The Berenstein-Zelevinsky's triangular basis $C^{t}$ is independent
of the acyclic seed $t$ chosen, which we denote by $C$.

\end{Thm}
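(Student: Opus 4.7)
The plan is to establish seed-independence by verifying invariance of the triangular basis under the elementary mutations relating adjacent acyclic seeds, and then to invoke the connectivity of the subgraph of acyclic seeds under such mutations. The mutations preserving acyclicity are precisely those at source or sink vertices of the acyclic order $\triangleleft$: such a mutation swaps the source/sink status of the mutated vertex but leaves the remaining acyclic relations intact, so any two acyclic seeds in a given mutation class are connected by a sequence of source/sink mutations. It thus suffices to show $C^t = C^{t'}$ whenever $t' = \mu_k(t)$ with $k$ a source of $t$; the sink case is dual.

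For such an adjacent pair I would compare the standard monomials directly. Here $X_i(t') = X_i(t)$ for $i \neq k$, $X_k(t') = S_k(t)$, and $S_k(t') = X_k(t)$, so each standard monomial $\overline{E}^{t'}_a$ can be rewritten inside $\cT(t) \cap \cT(t')$ as an expression in the cluster and $S$-variables of $t$, after $q$-commuting past the factor at position $k$ and applying the exchange relation at $k$ a single time. The central claim to verify is that this rewriting expresses $\overline{E}^{t'}_a$ as a bar-invariant $(\prec_{BZ}, \Bm)$-unitriangular combination of the $\overline{E}^{t}_{b}$ in the $t$-parametrization. Granting this, any basis satisfying the defining properties of $C^{t'}$, namely bar-invariance and triangularity to $\{\overline{E}^{t'}_a\}$, automatically satisfies the defining properties of $C^t$ relative to $\{\overline{E}^t_a\}$; uniqueness of the bar-invariant triangular basis via Lusztig's lemma (\cite[Theorem 1.1]{BerensteinZelevinsky2012}) then forces $C^{t'} = C^t$.

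The main obstacle will be controlling the partial order. Since $\prec_{BZ}$ is defined using the statistic $r(a) = \sum_{k=1}^{n} [-a_k]_+$, which depends on which variables of a seed are labeled as $X_k$ versus $S_k$, passing from $t'$ to $t$ changes both the parametrization and the statistic. One must verify that every correction term in the expansion of $\overline{E}^{t'}_a$ is strictly $\prec_{BZ}^t$-smaller than the leading term. The crucial simplification is the source condition at $k$: one has $b_{ik} \leq 0$ for all principal $i$, so the monomial $X(t)^{-e_k + \sum_i [b_{ik}]_+ e_i}$ appearing in $S_k(t)$ involves only frozen variables in its principal part, and substituting this piece of the exchange relation produces only corrections that genuinely decrease the $r^t$-statistic by at least one. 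This delicate bookkeeping --- essentially a precise description of how source/sink mutations act on the BZ dominance order and the standard-monomial basis --- is the technical heart of the argument.
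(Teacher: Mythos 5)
You should first note that the paper offers no proof of this statement at all: it is quoted verbatim from Berenstein--Zelevinsky (their Theorem 1.4), so the only meaningful comparison is with their original argument, which your outline does parallel at the top level (invariance under sink/source mutations plus connectedness of acyclic seeds under such mutations). However, as a proof your proposal has two genuine gaps. First, the connectivity step is a non sequitur as you argue it: the observation that mutating an acyclic seed at a source or sink again yields an acyclic seed does not imply that any two acyclic seeds of the same (quantum) cluster algebra are joined by a chain of source/sink mutations --- a priori two acyclic seeds could be linked only through non-acyclic intermediate seeds. That connectedness statement is a nontrivial theorem (due to Caldero and Keller in the skew-symmetric case, and it is exactly what Berenstein--Zelevinsky invoke), not a consequence of acyclicity being preserved.

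Second, what you call the ``central claim'' is precisely the technical heart of Berenstein--Zelevinsky's proof, and you leave it unverified while also misdescribing the rewriting it requires. If $k$ is a source of $t$ and $t'=\mu_k t$, it is true that $X_k(t')=S_k(t)$ and $S_k(t')=X_k(t)$, but for a vertex $i$ adjacent to $k$ one has $S_i(t')\neq S_i(t)$: the column $b_i$, hence the degree $-e_i+[-b_i]_+$ and the factor $(1+Y_i)$, change under the mutation. So expanding $\overline{E}^{t'}_a$ in the $t$-data is not a matter of ``$q$-commuting past the factor at position $k$ and applying the exchange relation at $k$ a single time''; every factor $S_i(t')$ with $a_i<0$ and $b_{ki}\neq 0$ must be re-expanded, and one must then show that all correction terms are $\prec_{BZ}$-smaller \emph{for the $t$-statistic} $r$, whose meaning changes between the two seeds, with coefficients in $\Bm$ (or $q^{\Hf}\Z[q^{\Hf}]$, depending on which side of the bar involution you work on), together with an explicit bijection of parametrizations identifying leading terms. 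Only after that bookkeeping does Lusztig's lemma give $C^{t}=C^{t'}$. As written, the proposal records the correct strategy but defers exactly the steps that constitute the proof.
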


\section{Compare triangular bases}

\subsection{Basic results}

Let we choose and work with any seed $t$ whose matrix $B(t)$ is
acyclic.

\begin{Lem}\label{lem:change_BZ_order}

For any acyclic seed $t$, each $C_{a}$ is $(\prec_{t},\Bm)$-unitriangular
to $\{\overline{E}_{a}\}$.

\end{Lem}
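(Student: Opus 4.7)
The plan is to extract the BZ expansion of $C_{a}$ in terms of $\{\overline{E}_{a'}\}$ and upgrade the $\prec_{BZ}$-triangularity to a $(\prec_{t},\Bm)$-unitriangular one by applying Lemma~\ref{lem:has_triangular_order}(ii). Applying the bar involution to the defining property of $C_{a}$ and using $\overline{C_{a}}=C_{a}$ yields the finite decomposition
\[
C_{a}=\overline{E}_{a}+\sum_{r(a')<r(a)}c_{a'}\,\overline{E}_{a'},\qquad c_{a'}\in\Bm,
\]
which is finite because $\{\overline{E}_{a}\}_{a\in\Z^{m}}$ is a $\Z[q^{\pm\Hf}]$-basis of $\qClAlg$ and $C_{a}$ is a fixed element. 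Each $\overline{E}_{a'}$ is pointed in $\cT(t)$ at the degree $g(a'):=a'+\sum_{k}[-a'_{k}]_{+}[-b_{k}]_{+}$, being the normalized $*$-product of the pointed bar-invariant quantum cluster variables $S_{k}$, $X_{k}$ and $X_{j}$. Moreover $a'\mapsto g(a')$ is injective: acyclicity localizes $[-b_{k}]_{+}$ to indices $i\triangleleft k$ in the principal part, so one recovers $a'_{k}$ inductively along the $\triangleleft$-order starting from a source (where $g(a')_{k}=a'_{k}$), and then reads off the frozen components from $g(a')_{j}$ for $j>n$. In particular the $\overline{E}_{a'}$ are pointed at pairwise distinct degrees.

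To apply Lemma~\ref{lem:has_triangular_order}(ii) it suffices to know that $C_{a}$ is itself pointed. I plan to verify pointedness via a parallel application of Lusztig's lemma with the dominance order $\prec_{t}$, which has finite intervals by Lemma~\ref{lem:finite_interval}. This yields a unique bar-invariant element
\[
C'_{a}=\overline{E}_{a}+\sum_{g(a')\prec_{t} g(a)}c'_{a'}\,\overline{E}_{a'},\qquad c'_{a'}\in\Bm,
\]
which is automatically pointed at $g(a)$ since all correction summands lie strictly $\prec_{t}$-below. The identification $C'_{a}=C_{a}$ would then transfer pointedness to $C_{a}$ and conclude via Lemma~\ref{lem:has_triangular_order}(ii) applied to the BZ expansion.

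The hard part is the identification $C'_{a}=C_{a}$: in general the two partial orders $\prec_{t}$ and $\prec_{BZ}$ are incomparable (one checks even in rank two that $g(a')\prec_{t} g(a)$ can hold with $r(a')=r(a)$), so uniqueness in one order does not directly apply in the other. My strategy is to show that the iterative Lusztig construction of $C'_{a}$ only ever invokes corrections by $\overline{E}_{a^{*}}$ with $r(a^{*})<r(a)$, so that $C'_{a}$ also satisfies the BZ-triangularity and hence coincides with $C_{a}$ by the uniqueness of the BZ basis. Since each $S_{k}$, $X_{k}$ and $X_{j}$ is bar-invariant in $\cT(t)$, the bar-invariance defect of the ordered $*$-product defining $\overline{E}_{a}$ (and, inductively, of each intermediate step) is produced purely by the $q$-scalars of the twisted multiplication; a direct bookkeeping of which Laurent-monomial degrees can carry non-bar-invariant coefficients, combined with the injectivity of $g$, should place all required corrections at indices $a^{*}$ with strictly fewer negative entries than $a$, closing the argument.
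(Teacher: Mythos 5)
The key lemma you invoke — Lemma~\ref{lem:has_triangular_order}(ii) — is indeed exactly what the paper uses, and your auxiliary observation that $a'\mapsto \deg^t\overline{E}_{a'}$ is injective (by unwinding along the acyclic order and then reading off the frozen part) is correct. Where you diverge from the paper is in how you handle the pointedness of $C_a$, and this is where your argument has a genuine gap.

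You correctly note that Lemma~\ref{lem:has_triangular_order}(ii) requires the element being decomposed to be pointed, and you propose to manufacture this by running a parallel Lusztig's-lemma argument with $\prec_t$ to produce a bar-invariant, pointed element $C'_a$, followed by the identification $C'_a=C_a$. But you explicitly leave this identification as "the hard part" with only a heuristic sketch ("a direct bookkeeping ... should place all required corrections ..."); as stated your argument does not close. Moreover, even setting up the auxiliary Lusztig argument requires first checking that the family $\{\overline{E}_{a'}\}$ satisfies the triangularity-with-respect-to-bar hypothesis in the $\prec_t$ order, which is itself not automatic and is not addressed. The paper avoids all of this: pointedness of $C_a$ (equivalently, that $C_a$ has a unique $\prec_t$-extreme term, with the same degree as $E_a$) is already part of Berenstein--Zelevinsky's Theorem 1.4 in \cite{BerensteinZelevinsky2012}, so the paper simply applies Lemma~\ref{lem:has_triangular_order} to the finite $\Bm$-decomposition and is done. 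Your plan re-derives a fact that is freely available and, in the process, introduces a nontrivial identification problem that you do not resolve. To fix the proof, replace the entire parallel-Lusztig scheme by a citation of BZ's pointedness statement, after which your appeal to Lemma~\ref{lem:has_triangular_order}(ii) (together with your injectivity check on the degrees of the $\overline{E}_{a'}$) immediately gives the claim.
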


\begin{proof}

Each $C_{a}$ is a finite linear combination of $\{\overline{E}_{a}\}$
with one term of coefficient $1$ and others of coefficients in $\Bm$.
This decomposition is $\prec_{t}$-triangular by Lemma \ref{lem:has_triangular_order}.

\end{proof}

\begin{Lem}\label{lem:keep_pointed}

If $n$ is a source point, then $\overline{E}_{a}$ remains pointed
in $t'=\mu_{n}t$.

\end{Lem}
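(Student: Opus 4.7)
The plan is to show $\overline{E}_{a}$ remains pointed in $t'$ by rewriting it as a normalized twisted product in $\cT(t')$ whose factors are each pointed in $t'$, and then using bar-invariance to force the leading $t'$-Laurent coefficient to equal $1$.

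First I would record the substitutions arising from $t'=\mu_{n}t$: the relations $S_{n}(t)=X_{n}(t')$, $X_{i}(t)=X_{i}(t')$ for $i\neq n$, and $X_{n}(t)=S_{n}(t')=X(t')^{-e_{n}+[b'_{n}]_{+}}+X(t')^{-e_{n}+[-b'_{n}]_{+}}$, the last being pointed in $t'$ at $-e_{n}+[-b'_{n}]_{+}$ with leading coefficient $1$. For each $1\leq k<n$, the element $S_{k}=X_{k}(\mu_{k}t)$ is a quantum cluster variable of $\qClAlg$, hence pointed in every seed, in particular in $t'$. The source hypothesis on $n$ is used here for a direct verification: since $b_{in}\leq 0$ for $i<n$ gives $b_{nk}\geq 0$ by skew-symmetrizability, both Laurent monomials $X(t)^{-e_{k}+[b_{k}]_{+}}$ and $X(t)^{-e_{k}+[-b_{k}]_{+}}$ composing $S_{k}$ involve only non-negative powers of $X_{n}(t)$ (their $n$-th components are $b_{nk}\geq 0$ and $0$, respectively); substituting $X_{n}(t)=S_{n}(t')$ then expresses $S_{k}$ as a finite Laurent polynomial in $\cT(t')$ whose unique $\prec_{t'}$-maximum Laurent degree is realized with coefficient $1$.

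Every factor in the product defining $\overline{E}_{a}$ is then pointed in $\cT(t')$. A twisted product of pointed elements of $\cT(t')$ has a unique $\prec_{t'}$-maximum Laurent degree, namely the sum of the factor degrees, and leading coefficient a pure $q$-power $q^{\beta}$ determined by $\Lambda(t')$. The normalization $[\,\cdot\,]^{t}$ divides by another pure $q$-power determined by $\Lambda(t)$, so the leading $t'$-Laurent coefficient of $\overline{E}_{a}$ is itself a pure $q$-power $q^{s}$. Since $\overline{E}_{a}$ is bar-invariant in $\cT(t)$, and hence in $\cT(t')$ by \cite[Proposition 6.2]{BerensteinZelevinsky05}, this leading coefficient satisfies $q^{s}=q^{-s}$, forcing $s=0$. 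Therefore $\overline{E}_{a}$ is pointed in $t'$.

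The main technical obstacle is verifying the pointedness of $S_{k}$ in $t'$ with leading coefficient exactly $1$, for $k<n$. Either one invokes the Laurent-phenomenon/$g$-vector theory as a black box, or one carries out an explicit tropical computation of the $\prec_{t'}$-maximum Laurent degree after substituting $X_{n}(t)=S_{n}(t')$; in both approaches, the source hypothesis on $n$ enables a clean substitution by ensuring non-negative $n$-th components in the Laurent monomials of $S_{k}$.
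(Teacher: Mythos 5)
The first half of your argument is consistent with the paper's: each factor of $\overline{E}_{a}$ is pointed in $\cT(t')$, and the source hypothesis guarantees that the $n$-th component of $\deg^{t}S_{k}$ vanishes for $k<n$, so that $\deg^{t'}S_{k}=\deg^{t}S_{k}$ (the paper obtains this from the tropical transformation of $g$-vectors; your direct-substitution sketch would additionally need to check that the substituted monomials land at degrees of the form $\deg^{t'}S_{k}+\deg^{t'}Y(t')^{v}$, not merely that a unique maximum survives, but that is repairable). The fatal step is the last one: $\overline{E}_{a}$ is \emph{not} bar-invariant in $\cT(t)$. It is the bar-image of the standard monomial $E_{a}$, and a normalized ordered product of bar-invariant factors is bar-invariant only when the factors pairwise $q$-commute; in general $\overline{\overline{E}_{a}}=E_{a}\neq\overline{E}_{a}$, since the $S_{k}$ and $X_{i}$ come from different seeds and do not $q$-commute. (If the $\overline{E}_{a}$ were bar-invariant, Berenstein--Zelevinsky would not need Lusztig's lemma at all.) A concrete counterexample sits in the paper itself: in Example \ref{eg:Kronecker}, working in the acyclic seed $t'$ with $a=(-1,-1)$, one has $\overline{E}_{a}=[S_{2}(t')*S_{1}(t')]^{t'}=X_{\delta}+q^{-2}[X_{1}(t')*X_{2}(t')]^{t'}$, which is not fixed by the bar involution because $X_{\delta}$ is bar-invariant while $q^{-2}[X_{1}(t')*X_{2}(t')]^{t'}$ is not.

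Consequently your mechanism for forcing the leading $t'$-coefficient $q^{s}$ to equal $1$ collapses, and proving $s=0$ is precisely the content of the lemma. The paper does this by showing that the $q$-normalization factor is unchanged under the mutation, i.e.\ by verifying $\Lambda(t)(\deg^{t}F_{1},\deg^{t}F_{2})=\Lambda(t')(\deg^{t'}F_{1},\deg^{t'}F_{2})$ for every pair of factors $F_{1},F_{2}$ occurring in $\overline{E}_{a}$, namely equations \eqref{eq:lambda_XX}--\eqref{eq:lambda_SS}, which follow from the mutation rule for $\Lambda$ together with $\deg^{t'}S_{l}=\deg^{t}S_{l}$ for $l<n$. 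This computation is the actual substance of the proof and is absent from your proposal; without it (or some valid substitute) you have only shown that $\overline{E}_{a}$ is a $q$-power times a pointed element of $\cT(t')$, not that it is pointed there.
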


\begin{proof}

It might be possible to deduce this result from the existence of common
Berenstein-Zelevinsky triangular bases in $t$ and $t'$. Let us give
an alternative elementary verification.

In order to show that the $q$-normalization factor producing by the
factors of $\overline{E}_{a}$ remains unchanged in $\cT(t')$, it
suffices to show that, for any $1\leq i,j\leq m$, $1\leq l<k\leq n$,
$i\neq k$, we have 
\begin{eqnarray}
\Lambda(t)(\deg^{t}X_{i},\deg^{t}X_{j}) & = & \Lambda(t')(\deg^{t'}X_{i},\deg^{t'}X_{j})\label{eq:lambda_XX}\\
\Lambda(t)(\deg^{t}X_{i},\deg^{t}S_{k}) & = & \Lambda(t')(\deg^{t'}X_{i},\deg^{t'}S_{k})\label{eq:lambda_XS}\\
\Lambda(t)(\deg^{t}S_{l},\deg^{t}S_{k}) & = & \Lambda(t')(\deg^{t'}S_{l},\deg^{t'}S_{k}).\label{eq:lambda_SS}
\end{eqnarray}

Notice that we have $\deg^{t}S_{l}=-e_{l}+\sum_{s}[-b_{sl}]_{+}e_{s}$,
where all $e_{s}$ appearing have $s\neq n$. Therefore, we deduce
that $\deg^{t'}S_{l}=\deg^{t}S_{l}$, $\forall l<n$, by the tropical
transformation of $g$-vectors, cf. \cite[Section 3.2]{Qin15}\cite{FockGoncharov03}\cite[(7.18)]{FominZelevinsky07}.
The first two equations simply follows from the mutation rule from
$\Lambda(t)$ to $\Lambda(t')$. It remains to check \eqref{eq:lambda_SS}.
By using \eqref{eq:lambda_XS}, we obtain

\begin{eqnarray*}
 &  & \Lambda(t)(\deg^{t}S_{l},\deg^{t}S_{k})\\
 & = & \Lambda(t)(-\deg^{t}X_{l}+\sum_{s}[-b_{sl}]_{+}\deg^{t}X_{s},\deg^{t}S_{k})\\
 & = & -\Lambda(t)(\deg^{t}X_{l},\deg^{t}S_{k})+\sum_{s}[b_{-sl}]_{+}\Lambda(t)(\deg^{t}X_{s},\deg^{t}S_{k})\\
 & = & -\Lambda(t')(\deg^{t'}X_{l},\deg^{t'}S_{k})+\sum_{s}[b_{-sl}]_{+}\Lambda(t')(\deg^{t'}X_{s},\deg^{t'}S_{k})\\
 & = & \Lambda(t')(\deg^{t'}S_{l},\deg^{t'}S_{k}).
\end{eqnarray*}

\end{proof}

The following statement is the main result of \cite{KimuraQin14}
accompanied with the coefficient correction technique in \cite{Qin12}.

\begin{Thm}[\cite{KimuraQin14}\cite{Qin12}]\label{thm:ayclic_triangular_basis}

If the principal part $B(t)$ of a seed $t$ is acyclic and skew-symmetric,
then the triangular basis $\can^{t}$ for $t$ exists. Moreover, it
contains all the quantum cluster monomials.

\end{Thm}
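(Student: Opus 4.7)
The plan is to reduce the theorem to the special case of the $z$-coefficient pattern, where the quantum cluster algebra admits a known representation-theoretic model, and then use the coefficient correction technique to conclude for arbitrary coefficients.

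First I would fix the seed $t$ with acyclic skew-symmetric principal part $B(t)$, and temporarily replace its coefficient part by the $z$-pattern as in Example \ref{eg:A_3_triangular}. In this case, by Geiss-Leclerc-Schröer together with Kimura, the resulting quantum cluster algebra is isomorphic (after localizing at the coefficients) to the quantum unipotent subgroup $A_q(\mathfrak{n}(c^{2}))$ for the Coxeter word $c$ determined by the chosen acyclic ordering $\triangleleft$. Under this isomorphism, the localized rescaled dual canonical basis $\mbf{B}^*$ of $A_q(\mathfrak{n}(c^{2}))$ transports to a bar-invariant, pointed basis of the quantum cluster algebra, with degrees parametrizing $\Z^m$ via the $g$-vector map.

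Next I would verify that this transported basis satisfies the four defining properties of $\can^t$ in Definition \ref{def:triangular_basis}. Bar-invariance and the $\prec_t$-pointed parametrization are built in. For the cluster monomial property, I would invoke the main theorem of \cite{KimuraQin14}, which identifies each cluster monomial in $t$ (and each monomial in $I_k(t)$, viewed via Remark \ref{rem:acyclic_injectives} as belonging to the seed $t[1]$) with a dual canonical basis element, up to $q$-power. The crucial triangularity property, namely that $[X_i(t)*S]^t$ expands as the leading dual canonical basis element plus a $\Bm$-linear combination of lower $\prec_t$-terms, follows from the multiplication rules of the dual canonical basis with a PBW-basis element: the leading term is governed by the additivity of $g$-vectors and the lower terms come from the $(\prec_t,\Bm)$-unitriangular relation between dual canonical and dual PBW bases in the quantum unipotent subgroup.

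Finally, I would remove the $z$-pattern assumption by applying the coefficient correction technique of \cite{Qin12} summarised in \cite{Qin15}. That technique shows that the properties ``existence of a bar-invariant, pointed basis triangular with respect to multiplication by the $X_i(t)$ and containing the specified monomials'' are preserved when one modifies the lower $(m-n)\times n$ block $B^c(t)$ of $\tilde B(t)$ or the matrix $\Lambda(t)$: one lifts the basis through a suitable change of coefficients, and uniqueness of $\can^t$ (from bar-invariance plus triangularity) guarantees that the lifted basis is the triangular basis of the new seed. The main obstacle is the triangularity verification in the $z$-pattern case, which is where the representation-theoretic input from \cite{KimuraQin14} does the heavy lifting; once that is in hand, passing to general coefficients via \cite{Qin12} is essentially formal.
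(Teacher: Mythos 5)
Your proposal takes essentially the same route as the paper: reduce to the $z$-coefficient pattern, invoke the Gei\ss--Leclerc--Schr\"oer isomorphism with a quantum unipotent subgroup to identify $\can^t$ with the (localized, rescaled) dual canonical basis and the factors of $\inj^t$ with dual PBW factors, cite \cite{KimuraQin14} for the cluster monomial property, and then transfer the statement to general coefficients and $\Lambda$ via the correction technique of \cite{Qin12}. The paper's proof is terser, delegating the triangularity verification entirely to the cited works, whereas you spell out where that verification comes from; the substance is the same.
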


\begin{proof}

When we choose the special coefficient pattern $B^{c}(t)$ to be $z$-pattern
as in \cite{KimuraQin14}, the quantum cluster algebra is isomorphic
to a subalgebra of a quantized enveloping algebra \cite{GeissLeclercSchroeer11}.
Under this identification, $X_{i}(t)$, $I_{k}(t)$ are the factors
of the dual PBW basis element, and the triangular basis $\can^{t}$
is just the restriction of the dual canonical basis on this subalgebra
(and localized at the coefficients $(X_{n+1},\cdots,X_{m})$). By
\cite{KimuraQin14}, this basis contains all the quantum cluster monomials.

By the correction technique in \cite{Qin12}, we can change the coefficient
pattern $B^{c}(t)$ and $\Lambda(t)$ while keeping the claim true.

\end{proof}

The following statement is implied by the general result in \cite[Theorem 9.4.1]{Qin15}.
We sketch a much easier proof for this special case.

\begin{Thm}\label{thm:common_triangular_basis}

Let $t$ and $t'$ be two seeds such that $t'=\mu_{k}t$ for some
$1\leq k\leq n$ and $B(t)$, $B(t')$ are acyclic and skew-symmetric.
Then the quantum cluster algebra has a basis $\can$ which is the
triangular basis for both $t$ and $t'$.

\end{Thm}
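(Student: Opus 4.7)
The plan is to show $\can^{t}=\can^{t'}$ by verifying that $\can^{t}$---which exists by Theorem \ref{thm:ayclic_triangular_basis}---satisfies the four defining properties of the triangular basis in the seed $t'$. Uniqueness of the triangular basis then forces $\can^{t}=\can^{t'}$, and this common basis is the desired $\can$. The first two axioms are immediate: Theorem \ref{thm:ayclic_triangular_basis} tells us $\can^{t}$ contains every quantum cluster monomial of $\qClAlg$, so in particular the $t'$-monomials $[\prod X_i(t')^{u_i}]^{t'}$ and $[\prod I_k(t')^{v_k}]^{t'}$ (which are cluster monomials in the seeds $t'$ and $t'[1]$) lie in $\can^{t}$; and bar-invariance is intrinsic to elements of $\cT(t)\cap\cT(t')$, hence transfers automatically.

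For pointedness in $t'$ and the parametrization bijection $\deg^{t'}:\can^{t}\simeq\Z^{m}$, my approach is to pass through the injective PBW-type set $\inj^{t}$, noting that $\can^{t}$ is $(\prec_{t},\Bm)$-unitriangular to $\inj^{t}$ in both directions. Each $\inj^{t}(f,u,v)$ is a normalized twisted product of cluster variables lying in the two clusters of $t$ and $t[1]$, so each factor is a cluster monomial and therefore pointed in every seed, with $g$-vector transforming tropically under $\mu_{k}$. Since twisted products of pointed elements remain pointed (up to a $q$-power) at the sum of the constituent $g$-vectors, every $\inj^{t}(f,u,v)$ is pointed in $t'$ at the tropical image of its $t$-degree, inducing a bijection of parametrizing lattices $\Z^{m}\to\Z^{m}$. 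Combining this with the unitriangular comparison between $\can^{t}$ and $\inj^{t}$, together with an analogue of Lemma \ref{lem:keep_pointed} that compares $\prec_{t}$ and $\prec_{t'}$ on the degrees arising from $\inj^{t}$, yields that every $b\in\can^{t}$ is pointed in $t'$ and that $\deg^{t'}$ is a bijection on $\can^{t}$.

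The last axiom is the triangularity of multiplication by each $X_i(t')$. For $i\neq k$ we have $X_i(t')=X_i(t)$, and the required $(\prec_{t'},\Bm)$-unitriangular decomposition of $[X_i(t')*b]^{t'}$ follows from the known triangularity of $\can^{t}$ in $t$, translated via the order comparison from the previous paragraph. The main obstacle is the case $i=k$, where $X_k(t')=S_k(t)=X(t)^{-e_k+[-b_k]_+}(1+Y_k(t))$: here I would decompose $b$ via $\inj^{t}$, apply Lemma \ref{lem:substitution} to each summand $[S_k(t)*\inj^{t}(\cdot)]^{t'}$, and use the maximal-degree tracking of \cite{Qin15} to single out the unique leading $\can^{t}$-contribution and to verify that all remaining coefficients land in $\Bm=q^{-\Hf}\Z[q^{-\Hf}]$. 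Acyclicity of both $B(t)$ and $B(t')$ keeps this tracking concrete, reducing it essentially to the explicit two-term exchange relation for $X_k(t)*X_k(t')$ and thereby avoiding the long general argument of \cite[Theorem 9.4.1]{Qin15}.
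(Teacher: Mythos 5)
Your strategy is genuinely different from the paper's, and it is worth seeing why the paper's route is substantially shorter. The paper applies Theorem~\ref{thm:ayclic_triangular_basis} to \emph{both} $t$ and $t'$ (both are acyclic), so both $\can^{t}$ and $\can^{t'}$ are known to exist at the outset. It then composes the three $(\prec,\Bm)$-unitriangular transitions $\can^{t'}\to\inj^{t'}\to\inj^{t}\to\can^{t}$ (the middle one coming from Lemma~\ref{lem:substitution} applied after decomposing the new factors $X_k(t')^d$, $I_k(t')^d$ inside $\inj^{t}$), and the bar-invariance of both bases kills the off-diagonal coefficients. The crucial payoff is that one never has to \emph{verify} any of the triangular-basis axioms in $t'$: equality of two bar-invariant, mutually unitriangular bases is automatic. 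Your plan instead tries to show directly that $\can^{t}$ satisfies the defining axioms with respect to $t'$, and this is where the real difficulty hides.

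Two concrete gaps. First, the pointedness/parametrization step: the decomposition of $b\in\can^{t}$ in $\inj^{t}$ is in general a \emph{possibly infinite} linear combination (the paper's formalism allows this, guarded only by Lemma~\ref{lem:finite_interval}), and pointedness of each summand in $\cT(t')$ does not transfer to an infinite sum without an additional convergence-type argument in the $\prec_{t'}$-order; you cannot just sum the tropical images of the $t$-degrees. Second, and more seriously, your treatment of the $i=k$ triangularity axiom misapplies Lemma~\ref{lem:substitution}: that lemma multiplies on the left by frozen variables and by the initial cluster variables $X_i(t)$, and on the right by $I_k(t)$, but $S_k(t)=X_k(t')$ is none of these. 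To apply the lemma in the seed $t'$ you would need the summands $\inj^{t}(\cdot)$ to already be $(\prec_{t'},\Bm)$-unitriangular to $\can^{t'}$, which is essentially the statement being proved. This axiom is exactly the hard part — the ``maximal-degree tracking'' from \cite{Qin15} that the paper's bar-invariance argument is designed to avoid — and the two-term exchange relation for $X_k(t)*X_k(t')$ does not by itself control $[X_k(t')*S]^{t'}$ for a general basis element $S$. In short: the plan is coherent as an outline, but the route you chose forces you to prove the triangularity axiom directly, which is what the paper's proof was crafted to bypass; you should instead invoke Theorem~\ref{thm:ayclic_triangular_basis} for $t'$ as well and use bar-invariance.
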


\begin{proof}

Because $t$ and $t'$ are acyclic, by Theorem \ref{thm:ayclic_triangular_basis},
we know that the triangular bases $\can^{t}$ and $\can^{t'}$ for
$t$ and $t'$ exist. Moreover, the quantum cluster monomials $X_{k}'^{d}=X_{k}(t')^{d}$,
$I_{k}'^{d}=I_{k}(t')^{d}$ belong to $\can^{t}$, where $d\in\N$.
Therefore, $X_{k}'^{d}$ and $I_{k}'^{d}$ have $(\prec_{t},\Bm)$-unitriangular
decomposition in the injective pointed set $\inj^{t}$. These are
the only new factors of elements in $\inj^{t'}$ which are not factors
of elements in $\inj^{t}$. 

Easy calculation shows that elements in $\inj^{t'}$ remain pointed
in $\cT(t)$, cf. \cite[Lemma 5.3.2]{Qin15}. Substituting their new
factors $X_{k}'^{d}$ and $I_{k}'^{d}$ by the decomposition in $\inj^{t}$,
we deduce that $\inj^{t'}$ is $(\prec_{t},\Bm)$-unitriangular to
$\inj^{t}$ by Lemma \ref{lem:substitution}.

Also, notice that $\can^{t'}$ is $(\prec_{t'},\Bm)$-unitriangular
to $\inj^{t'}$ and $\inj^{t}$ is $(\prec_{t},\Bm)$-unitriangular
to $\can^{t}$. Composing these three transitions, we obtain that
any $S'\in\can^{t'}$ is a finite combination of elements $S$, $S_{i}$
in $\can^{t}$:

\begin{align*}
S' & =S+\sum_{i}a_{i}S_{i},
\end{align*}

with coefficient $a_{i}\in\Bm$.

Now by the bar-invariance of $\can^{t}$ and $\can^{t'}$, we must
have $a_{i}=0$ and $S'=S$. It follows that the two triangular bases
$\can^{t}$ and $\can^{t'}$ are the same.

\end{proof}

\subsection{Proof of the main result}

For any chosen $1\leq j\leq n$, let $t[j^{-1}]$ denote the seed
obtained from $t$ by deleting the $j$-th column in the matrix $\tB(t)$.
This operation is called freezing the vertex $j$. We have the corresponding
quantum cluster algebra $\qClAlg(t[j^{-1}])$. Observe that the normalization
$[\ ]^{t[j^{-1}]}=[\ ]^{t}$ because $\Lambda(t[j^{-1}])=\Lambda(t)$
by construction. Moreover, the partial order $\prec_{t[n^{-1}]}$
implies $\prec_{t}$ by definition. We can define similarly, for $f\in\Z^{\{j\}\cup[n+1,m]},u,v\in\N^{[1,n]-\{j\}}$,
where $u_{k}v_{k}=0$ for any $k$: 
\begin{align*}
 & \inj^{t[j^{-1}]}(f,u,v)\\
 & =[\prod_{n+1\leq i\leq m}X_{i}^{f_{i}}*X_{j}^{f_{j}}*\prod_{1\leq k\leq n,k\neq j}X_{k}^{u_{k}}*\prod_{1\leq k\leq n,k\neq j}I_{k}(t[j^{-1}])^{v_{k}}]^{t[j^{-1}]}.
\end{align*}

We want to compare this new injective pointed set $\inj^{t[j^{-1}]}$
with the old one $\inj^{t}$. One has to pay attention to the possible
localization at $X_{j}$ in the seed $t[j^{-1}]$.

Assume the vertex $n$ to be $\triangleleft$-maximal, namely, a source point,
then $I_{k}(t[n^{-1}])=I_{k}(t)$ for all $1\leq k< n$,
cf. Remark \ref{rem:acyclic_injectives}, and,  moreover, $(\deg Y_{i})_{n}=b_{ni}\geq 0$
$\forall 1\leq i\leq n$. It follows that the Laurent monomials of $I_{k}(t)$, $\forall k\neq n$, have non-negative degrees in $X_{n}$.

Notice that, for a source point $n$, if $f_{n}\geq0$,
then $\inj^{t[n^{-1}]}(f,u,v)\in\inj^{t}$.

\begin{Lem}\label{lem:restricted_basis}

Assume that $n$ is a source point and a pointed element $Z\in\qClAlg(t[n^{-1}])$
has a finite combination of 
\begin{align*}
Z & =\sum_{s}a_{s}\inj^{t[n^{-1}]}(f^{(s)},u^{(s)},v^{(s)}).
\end{align*}

If $(\deg Z)_{n}\geq0$, then we have $f_{n}^{(s)}\geq0$ whenever
$a_{s}\neq0$. Consequently, all $\inj^{t[n^{-1}]}(f^{(s)},u^{(s)},v^{(s)})$
appearing in the combination are contained in $\inj^{t}$.

\end{Lem}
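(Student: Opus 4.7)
The plan is to track the $n$-th coordinate of Laurent degrees, exploiting that $n$ is a source.

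First, I will compute $(\deg^{t[n^{-1}]} \inj^{t[n^{-1}]}(f,u,v))_n = f_n$. This rests on three observations: $(\deg X_i)_n = \delta_{in}$; $(\deg I_k(t))_n = 0$ for $k < n$, which follows from $\pr_n \deg I_k(t) = -e_k$; and the normalization $[\,]^{t[n^{-1}]} = [\,]^{t}$ only rescales by a $q$-power and does not affect the Laurent degree. Second, I will use the source hypothesis: for each mutable direction $k \in [1,n-1]$ of $t[n^{-1}]$ we have $(\deg Y_k)_n = b_{nk} \ge 0$, since $k \triangleleft n$ forces $b_{kn} \le 0$ and $B(t)$ is skew-symmetric. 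Consequently the partial order $\prec_{t[n^{-1}]}$ enjoys the monotonicity $g \preceq_{t[n^{-1}]} g' \Rightarrow g_n \ge g'_n$.

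Third, I will apply Lemma~\ref{lem:has_triangular_order}(i) to the given finite decomposition of $Z$, regarded as a pointed element in $\cT(t[n^{-1}])$. It yields that the decomposition is $\prec_{t[n^{-1}]}$-unitriangular, and in particular $\deg Z$ is the unique $\prec_{t[n^{-1}]}$-maximal element of the finite set $\{g^{(s)} : a_s \neq 0\}$, where $g^{(s)} := \deg^{t[n^{-1}]} \inj^{t[n^{-1}]}(f^{(s)}, u^{(s)}, v^{(s)})$. In a finite poset a unique maximal element is automatically a maximum, so $g^{(s)} \preceq_{t[n^{-1}]} \deg Z$ for every $s$ with $a_s \neq 0$. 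Taking $n$-th coordinates and combining with the first two steps gives $f_n^{(s)} = (g^{(s)})_n \ge (\deg Z)_n \ge 0$. The concluding statement $\inj^{t[n^{-1}]}(f^{(s)}, u^{(s)}, v^{(s)}) \in \inj^{t}$ is then the explicit observation recorded immediately before the lemma (one simply reinterprets $f_n^{(s)} \ge 0$ as the $u_n$-exponent in the $t$-parametrization, with $v_n = 0$).

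The only delicate point I anticipate is the upgrade from ``unique $\prec_{t[n^{-1}]}$-maximal'' to ``dominates every element in the set'', which is immediate in a finite poset by iteratively passing to a strictly $\prec$-larger element until one reaches a maximal element, which must then coincide with the unique maximal. Everything else is routine bookkeeping in the $n$-th coordinate of degree vectors, and no delicate cancellation analysis among the Laurent monomials of the $\inj^{t[n^{-1}]}(f^{(s)}, u^{(s)}, v^{(s)})$ is needed.
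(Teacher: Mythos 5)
Your proof is correct and follows essentially the same route as the paper: apply Lemma \ref{lem:has_triangular_order}(i) to the finite decomposition, note that all leading degrees are dominated by $\deg Z$, and track the $n$-th coordinate using $(\deg Y_i)_n\geq 0$ (source condition) together with $(\deg I_k(t))_n=0$ and $I_k(t[n^{-1}])=I_k(t)$, giving $f_n^{(s)}=(g^{(s)})_n\geq(\deg Z)_n\geq 0$. The only cosmetic difference is that you phrase the triangularity with respect to $\prec_{t[n^{-1}]}$ while the paper uses $\prec_t$; both work since $\prec_{t[n^{-1}]}$ implies $\prec_t$ and the required positivity of the $n$-th components of the $\deg Y_i$ holds in either case.
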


\begin{proof}
Recall that $\inj^{t[n^{-1}]}$ is a linearly independent family of
pointed elements with distinguished leading degrees. By Lemma \ref{lem:has_triangular_order}(i), the given decomposition of $Z$ is $\prec_t$-unitriangular with a unique  leading term $\inj^{t[n^{-1}]}(f^{(0)},u^{(0)},v^{(0)})$ whose leading degree equals $\deg Z$. So the leading degrees of all $\inj^{t[n^{-1}]}(f^{(s)},u^{(s)},v^{(s)})$ appearing are $\prec_t$-inferior or equal to $\deg Z$. Since $(\deg Z)_n\geq 0$ and $(\deg Y_i)_n\geq 0$, $\forall 1\leq i\leq n$, they are all non-negative in the $n$-th components.

Notice that $\pr_n \deg I_{k}(t)=-e_k$ by definition and, in particular, the leading degree $\deg I_{k}(t)$, $\forall k<n$, vanishes in the $n$-th components.
It follows that $\deg \inj^{t[n^{-1}]}(f^{(s)},u^{(s)},v^{(s)})$ has non-negative $n$-th component if and only if $f^{(s)}_n\geq 0$. The claim follows.
\end{proof}

\begin{proof}[Proof of Theorem \ref{thm:acyclic}]

We prove the claim by induction on the rank $n$ of $\tB(t)$. The
cases $n=0$ are trivial.

Up to relabeling vertices, let us assume that $n$ is a source point
in $t$. Denote $t'=\mu_{n}t$.

It suffices to show that every $\overline{E}_{a}$, $a\in\Z^{m}$,
is $(\prec_{t},\Bm)$-triangular to $\can^{t}$. If so, combined with
Lemma \ref{lem:change_BZ_order}, we obtain that every bar-invariant
element $C_{a}$ is $(\prec_{t},\Bm)$-triangular to $\can^{t}$ and,
consequently, must belong to $\can^{t}$. It follows that the two
bases $\can^{t}$ and $C$ must agree.

(i) Assume $a_{n}\geq0$. Consider the seed $t[n^{-1}]$ obtained
by freezing the vertex $n$ in $t$. It is acyclic whose matrix $\tB(t[n^{-1}])$
has rank $n-1$. By induction hypothesis, its triangular basis $\can^{t[n^{-1}]}$
agrees with its BZ-basis $C^{t[n^{-1}]}$. Notice that the corresponding
standard monomial $E_{a}$ is also a standard monomials for seed $t[n^{-1}]$.
Therefore, $\overline{E}_{a}$ admits a finite decomposition in $C^{t[n^{-1}]}=\can^{t[n^{-1}]}$
with one term of coefficient $1$ and other terms of coefficient in
$\Bm$. Recall that $\can^{t[n^{-1}]}$ is $\prec_{t[n^{-1}]}$-unitriangular
to $\inj^{t[n^{-1}]}$. Composing these two transitions, we see that
$\overline{E}_{a}$ has a finite decomposition in $\inj^{t[n^{-1}]}$
with one term of coefficient $1$ and others of coefficient in $\Bm$.
Further notice that $(\deg\overline{E}_{a})_{n}\geq0$, by Lemma \ref{lem:restricted_basis},
the decomposition terms appearing belong to $\inj^{t}$. By Lemma
\ref{lem:has_triangular_order}, $\overline{E}_{a}$ is $(\prec_{t},\Bm)$-unitriangular
to $\inj^{t}$, and consequently $(\prec_{t},\Bm)$-unitriangular
to $\can^{t}$. 

(ii) When $a_{n}<0$, let us rewrite $\overline{E}_{a}$ as $[S_{n}^{[-a_{n}]_{+}}*\overline{E}_{a_{\hat{n}}}]^{t}$,
where $a_{\hat{n}}$ denote the vector obtained from $a$ by setting
the $n$-th component to $0$. Notice that $\overline{E}_{a}$ is
also pointed in $t'$ by Lemma \ref{lem:keep_pointed}, namely, $\overline{E}_{a}=[S_{n}^{[-a_{n}]_{+}}*\overline{E}_{a_{\hat{n}}}]^{t'}$.
For the seed $t'$, we freeze the vertex $n$ and repeat the argument
in $(i)$, it follows that $\overline{E}_{a_{\hat{n}}}$ is $(\prec_{t'},\Bm)$-unitriangular
to the triangular basis $\can^{t'}$ of the seed $t'$. Notice that
$S_{n}$ is the $n$-th cluster variable in the seed $t'$. By Lemma
\ref{lem:substitution}, we obtain that $\overline{E}_{a}$ is $(\prec_{t'},\Bm)$-unitriangular
to the triangular basis $\can^{t'}$ of the seed $t'$. Because $\can^{t}=\can^{t'}$
by Theorem \ref{thm:common_triangular_basis}, $\overline{E}_{a}$
is $(\prec_{t},\Bm)$-unitriangular to $\can^{t}$ by Lemma \ref{lem:has_triangular_order}.

\end{proof}

\subsection{Bipartite skew-symmetrizable case}

We say the seed $t$ has a bipartite orientation (we say $t$ is bipartite
for short), if we have $\{1,\cdots,n\}=V_{0}\sqcup V_{1}$, such that
all the vertices in $V_{0}$ are source points and those in $V_{1}$
are sink points. 

Assume that $t$ is bipartite. Let we denote by $t'$ the seed obtained
from $t$ by mutating at all the vertices in $V_{1}$, namely,

\begin{align*}
\mu_{V_{1}} & =\prod_{k\in V_{1}}\mu_{k}\\
t' & =\mu_{V_{1}}t.
\end{align*}

Notice that the mutations $\mu_{k}$, $k\in V_{1}$, commute with
each other.

The following lemma follows from the definitions of the corresponding
cluster variables, cf. Figure \ref{fig:knitting} for identification
of cluster variables, where $i\in V_{0}$, $j\in V_{1}$, the graph
are constructed via the knitting algorithm, cf. \cite{Keller08Note}.

\begin{Lem}

\label{lem:comparison} We have, for any $1\leq i,j\leq n$, 
\begin{align}
X_{i}(t')=X_{i}(t),\ i\in V_{0},\\
X_{j}(t')=I_{j}(t),\ j\in V_{1},\\
S_{i}(t')=I_{i}(t),\ i\in V_{0},\\
S_{j}(t')=X_{j}(t),\ j\in V_{1}.
\end{align}

\end{Lem}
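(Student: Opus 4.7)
The plan is to verify the four equalities by direct manipulation of mutations, exploiting two structural facts about the bipartite seed $t$. First, vertices inside $V_{0}$ (respectively inside $V_{1}$) are pairwise non-adjacent, so the elementary mutations $\mu_{k}$ for $k$ in one of these two sets pairwise commute and each such $\mu_{k}$ affects only $X_{k}$. Second, by Remark \ref{rem:acyclic_injectives} the injective cluster variables $I_{k}(t)$ are obtained by mutating in $\triangleleft$-increasing order, which in the bipartite case means first $\mu_{V_{1}}$ and then $\mu_{V_{0}}$; hence $I_{k}(t)=X_{k}(\mu_{V_{0}}\mu_{V_{1}}t)=X_{k}(\mu_{V_{0}}t')$ for every $k\in[1,n]$.

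Given these facts, the first equality is immediate since $i\in V_{0}$ is not mutated in $\mu_{V_{1}}$. For the second equality, commutativity of the $\mu_{k}$ inside $V_{1}$ gives $X_{j}(t')=X_{j}(\mu_{j}t)=S_{j}(t)$ for $j\in V_{1}$; a brief degree computation using that $j$ is a sink (so $[-b_{ij}]_{+}=0$ for $i\in[1,n]$) yields $\mathrm{pr}_{n}\deg S_{j}(t)=-e_{j}$, whence $I_{j}(t)=S_{j}(t)$ by the uniqueness clause in the definition of $I_{j}$. The fourth equality is analogous: $\mu_{j}t'=\mu_{V_{1}\setminus\{j\}}t$ by involutivity and commutativity of the $\mu_{k}$'s, and the remaining mutations do not touch $X_{j}$, so $S_{j}(t')=X_{j}(\mu_{j}t')=X_{j}(t)$.

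The most delicate case is the third equality. I would first argue that the vertices of $V_{0}$ remain pairwise non-adjacent after $\mu_{V_{1}}$: any newly created arrow $i_{1}\to i_{2}$ between $i_{1},i_{2}\in V_{0}$ in $t'$ would come from a two-step path $i_{1}\to k\to i_{2}$ in $t$ with $k\in V_{1}$, but $k$ is a sink in $t$ so it has no outgoing arrows. Consequently, the $\mu_{i}$ for $i\in V_{0}$ still commute in $t'$ and each changes only $X_{i}$, giving $X_{i}(\mu_{V_{0}}t')=X_{i}(\mu_{i}t')=S_{i}(t')$; comparing with $I_{i}(t)=X_{i}(\mu_{V_{0}}t')$ then proves $I_{i}(t)=S_{i}(t')$. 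The only real obstacle is this structural quiver claim (no new arrows between sources after mutating at sinks), which is settled by one application of the quiver mutation rule; the remaining verifications reduce to commutativity of non-adjacent mutations plus elementary degree checks.
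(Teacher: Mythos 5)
Your argument is correct, and it is genuinely more explicit than what the paper does: the paper disposes of Lemma \ref{lem:comparison} by pointing to Figure \ref{fig:knitting} and the knitting algorithm of \cite{Keller08Note}, i.e.\ it reads off the identifications from the translation structure of that graph without written computation. You instead verify the four identities directly from the mutation rule: you use that mutations at pairwise non-adjacent vertices commute and change only the variable at the mutated vertex, you use Remark \ref{rem:acyclic_injectives} to realize $I_{k}(t)=X_{k}(\mu_{V_{0}}\mu_{V_{1}}t)$ (sinks first, then sources, consistent with the increasing-$\triangleleft$ order), you identify $S_{j}(t)=I_{j}(t)$ for a sink $j$ by computing $\mathrm{pr}_{n}\deg S_{j}=-e_{j}$ (since $[-b_{ij}]_{+}=0$ for $i\in[1,n]$) and invoking the uniqueness in the definition of $I_{j}$, and you justify the delicate third identity by checking via the matrix mutation rule that mutating at sinks creates no arrows among the sources, so that $V_{0}$ stays pairwise non-adjacent in $t'$ and $X_{i}(\mu_{V_{0}}t')=S_{i}(t')$. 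All of these steps are sound (and consistent with Example \ref{eg:Kronecker}). What your route buys is a self-contained combinatorial proof requiring no reference to the knitting/AR-type picture; what the paper's reference buys is brevity and the conceptual interpretation of the identifications as a shift/translation of the bipartite belt. A small streamlining you could note: for $j\in V_{1}$ the second identity also follows immediately from Remark \ref{rem:acyclic_injectives}, since $I_{j}(t)=X_{j}(\mu_{V_{0}}\mu_{V_{1}}t)=X_{j}(t')$ because the $\mu_{V_{0}}$ mutations do not touch vertex $j$, which lets you bypass the degree computation there.
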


\begin{figure}[htb!]  \centering \beginpgfgraphicnamed{fig:knitting}   \begin{tikzpicture}     \node [ draw] (v1) at (1,-3) {$X_i(t)$};   \node  [ draw] (v2) at (2,-1.5) {$X_j(t)$}; \node  [draw] (v3) at (0,-1.5) {$I_j(t)$};   \node [draw] (v4) at (-1,-3) {$I_i(t)$};   
  \draw[-triangle 60] (v1) edge (v2);   \draw[-triangle 60] (v3) edge (v1);  \draw[-triangle 60] (v4) edge (v3);
\node [ draw] (v11) at (6,-3) {$X_i(t')$};   \node  [ draw] (v12) at (7,-1.5) {$S_j(t')$}; \node  [draw] (v13) at (5,-1.5) {$X_j(t')$};  \node [ draw] (v14) at (4,-3) {$S_i(t')$};    
  \draw[-triangle 60] (v11) edge (v12);   \draw[-triangle 60] (v13) edge (v11);  \draw[-triangle 60] (v14) edge (v13);
\end{tikzpicture} \endpgfgraphicnamed \caption{Part of knitting graphs for the seeds $t$ and $t'$.} \label{fig:knitting} \end{figure}
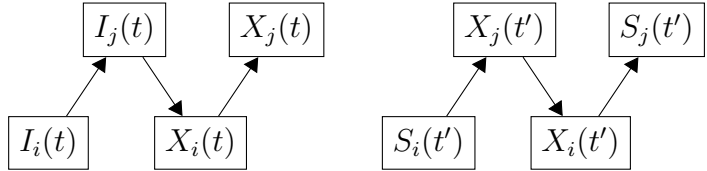

It follows from Lemma \ref{lem:comparison} that those $S(t')$, $i\in V_{0}$
$q$-commute with each other, and $S_{j}(t')$, $j\in V_{1}$, $q$-commute
with each other.

Notice that $t'$ is still bipartite with the vertices in $V_{0}$
being sink points and the vertices in $V_{1}$ being source points.

\begin{Lem}\label{lem:variable_commute}

(1) For any $1\leq k\neq j\leq n$, such that $j\in V_{1}$, $X_{k}(t)$
and $I_{j}(t)$ $q$-commute.

(2) For any $1\leq i\neq k\leq n$, such that $i\in V_{0}$, $X_{i}(t)$
and $I_{k}(t)$ $q$-commute

\end{Lem}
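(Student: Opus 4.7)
The plan is to reduce both statements to the elementary fact that all $X$-variables of a single quantum seed $q$-commute. The tool for this reduction is Lemma \ref{lem:comparison}, which rewrites each of $X_k(t)$ and $I_j(t)$ (resp. $X_i(t)$ and $I_k(t)$) as an $X$-variable of either $t'$ or of a one-step mutation of $t'$. The strategy in each case is then to produce one quantum seed that contains both variables simultaneously.

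For part (1), I would fix $j \in V_1$ and split on where $k$ sits. If $k \in V_0$, Lemma \ref{lem:comparison} rewrites both $X_k(t) = X_k(t')$ and $I_j(t) = X_j(t')$ as $X$-variables of the same seed $t'$, so they $q$-commute. If $k \in V_1$ with $k \neq j$, the same lemma gives $X_k(t) = S_k(t') = X_k(\mu_k t')$; and since the mutation $\mu_k$ does not touch $X_j$ for $j \neq k$, we have $I_j(t) = X_j(t') = X_j(\mu_k t')$, placing both variables in the single seed $\mu_k t'$. Part (2) is handled by the same two-case split with the roles of $V_0$ and $V_1$ exchanged: for $i \in V_0$ and $k \neq i$, one puts both variables in $t'$ when $k \in V_1$ (via $X_i(t) = X_i(t')$ and $I_k(t) = X_k(t')$), and in $\mu_k t'$ when $k \in V_0$ (via $I_k(t) = S_k(t') = X_k(\mu_k t')$ and $X_i(t) = X_i(t') = X_i(\mu_k t')$).

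The argument has no substantial obstacle. The only thing to keep in mind is that $S_k(t')$, although not itself an $X$-variable of $t'$, is by definition the $X$-variable $X_k(\mu_k t')$, so that the correct seed to work in is the mutated one $\mu_k t'$ rather than $t'$. The other bookkeeping point is that $\mu_k$ for $k \neq j$ (resp. $k \neq i$) leaves the other relevant $X$-variable untouched, which is why the two $X$-variables in question end up genuinely in the same seed.
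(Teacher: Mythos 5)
Your proof is correct and rests on the same key principle as the paper's: reduce to the fact that $X$-variables of a single quantum seed $q$-commute, and use Lemma \ref{lem:comparison} (together with the definition $S_k(t')=X_k(\mu_k t')$) to exhibit a common seed. The only real divergence is in part (1): the paper's argument there is a one-liner that stays in the seed $t$ --- since $j\in V_1$ is a sink point, $I_j(t)=X_j(\mu_j t)$ directly, and $X_k(t)=X_k(\mu_j t)$ for all $k\neq j$, so no case split on $k$ is needed and no detour through $t'$ is required. You instead route through $t'=\mu_{V_1}t$ and split on $k\in V_0$ versus $k\in V_1$, landing in $t'$ or $\mu_k t'$ respectively; this is valid but slightly less economical. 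For part (2), your treatment of the essential case $i,k\in V_0$ via the seed $\mu_k t'$ is exactly what the paper does (the paper dispatches the remaining case $k\in V_1$ by citing part (1), while you re-derive it directly from Lemma \ref{lem:comparison} inside $t'$). The bookkeeping points you flag --- that $S_k(t')$ lives in $\mu_k t'$ rather than $t'$, and that $\mu_k$ leaves the other variable untouched --- are precisely what makes the seed-membership arguments go through.
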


\begin{proof}

(1) $X_{k}(t)$ and $I_{j}(t)$ are quantum cluster variables in the
same seed $\mu_{j}t$.

(2) By (1), it remains to check the case $i,k\in V_{0}$. Notice that
$V_{0}$ consist of sink points in $t'=\mu_{V_{1}}t$. $X_{i}(t)$
and $I_{k}(t)$ are quantum cluster variables in the same seed $\mu_{k}t'$.

\end{proof}

\begin{Lem}\label{lem:keep_pointed_bipartite}

The pointed element $\overline{E}_{a}$ defined in $t'$ remains pointed
in $t=\mu_{V_{1}}t'$.

\end{Lem}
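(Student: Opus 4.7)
The plan is to mimic, at the level of the bipartite mutation $\mu_{V_1}$, the $\Lambda$-invariance verification that was carried out in the proof of Lemma~\ref{lem:keep_pointed} for a single source mutation.

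First I would use Lemma~\ref{lem:comparison} to rewrite each factor of the product defining $\overline{E}_a$ in $\cT(t')$ as a quantum cluster variable of the seed $t$ (or a frozen variable). Specifically, the factors $S_j(t')^{[-a_j]_+}$ become $X_j(t)^{[-a_j]_+}$ for $j\in V_1$, the factors $S_i(t')^{[-a_i]_+}$ become $I_i(t)^{-a_i}$ for $i\in V_0$, and analogously for the $X_k(t')^{[a_k]_+}$ factors. For each $1\leq k\leq n$ exactly one such factor with subscript $k$ appears (depending on the sign of $a_k$), so Lemma~\ref{lem:variable_commute}, combined with the fact that $\{X_i(t):i\in V_0\}$ (respectively $\{I_j(t):j\in V_1\}$) belong to a common seed $t$ (respectively $t[1]$), implies that all these factors pairwise $q$-commute in the ambient algebra.

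The key step is the following general observation: if two algebra elements $A,B$ satisfy $AB=q^{\mu}BA$ and are both pointed in a quantum torus $\cT(\tau)$ with leading degrees $g_A,g_B$, then comparing leading Laurent monomials of $AB$ and $BA$ in $\cT(\tau)$ forces $\mu=\Lambda(\tau)(g_A,g_B)$. Applying this to every pair of factors of $\overline{E}_a$, once in $\cT(t)$ and once in $\cT(t')$, yields
\begin{align*}
\Lambda(t)(\deg^{t}A,\deg^{t}B)=\Lambda(t')(\deg^{t'}A,\deg^{t'}B).
\end{align*}
Consequently, the $q$-scalar normalization coming from the ordered twisted product $A_1*\cdots*A_r$ agrees in the two tori, so $\overline{E}_a=[A_1*\cdots*A_r]^{t'}=[A_1*\cdots*A_r]^{t}$. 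Since each $A_s$ is pointed in $\cT(t)$ and the factors pairwise $q$-commute there, the right-hand side is pointed in $\cT(t)$, which finishes the proof.

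The main obstacle will be to justify that the various factors are honestly $q$-commuting in the algebra, not merely up to lower-order Laurent corrections. For pairs of mixed cluster variables $X_k(t)$, $I_l(t)$ this is exactly the content of Lemma~\ref{lem:variable_commute}. For pairs involving a frozen variable $X_k$ with $k>n$, it follows from the compatibility relation $\tB^{T}\Lambda=(D\;\;0)$, which gives $\Lambda(\deg Y_l,e_k)=0$ for $l\leq n<k$ and thus ensures that $X_k$ strictly $q$-commutes with every pointed element of $\cT(t)$.
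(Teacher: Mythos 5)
Your proof is correct, but it takes a genuinely different route from the paper's. The paper's proof is terse: it observes that the vertices in $V_1$ are source points in $t'$ not connected by arrows, and then simply iterates the explicit $\Lambda$-matrix computation of Lemma~\ref{lem:keep_pointed} (equations \eqref{eq:lambda_XX}--\eqref{eq:lambda_SS}, which use the mutation rule for $\Lambda$ together with the tropical transformation of degree vectors) one source vertex at a time. You instead bypass the mutation rule entirely via the observation that if two pointed elements $A$, $B$ satisfy $A*B = q^{\mu}B*A$ in the ambient algebra, then comparing leading Laurent monomials forces $\mu = \Lambda(\tau)(\deg^{\tau}A,\deg^{\tau}B)$ in \emph{any} torus $\cT(\tau)$ in which both are pointed; since $\mu$ is intrinsic, the $\Lambda$-pairing of leading degrees is therefore the same in $\cT(t)$ and $\cT(t')$, and the normalization scalar of the ordered product is preserved. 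This is cleaner and less computational, at the price of invoking (i) that all the factors (via Lemma~\ref{lem:comparison}) are cluster/frozen variables, hence pointed in $\cT(t)$ as well, and (ii) the honest pairwise $q$-commutativity supplied by Lemma~\ref{lem:variable_commute} and the common-seed argument, plus your compatibility-pair argument for frozen factors (which is correct: $\Lambda(\deg Y_l, e_k)=(\tB^T\Lambda)_{lk}=0$ for $l\leq n<k$). A small slip: $S_i(t')^{[-a_i]_+}$ should rewrite as $I_i(t)^{[-a_i]_+}$, not $I_i(t)^{-a_i}$, and for $a_k=0$ no factor with index $k$ appears; neither affects the argument.
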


\begin{proof}

The vertices in $V_{1}$ are source points in $t'$ which are not
connected by arrows. We simply repeat the proof of Lemma \ref{lem:keep_pointed}.

\end{proof}

\begin{Thm}\label{thm:bipartite}

For bipartite $t$, the Berenstein-Zelevinsky's triangular basis $C$
is also the triangular basis $\can^{t}$.

\end{Thm}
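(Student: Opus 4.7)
The plan is to show each BZ basis element $C_a$ lies in $\can^t$; once this is established, the fact that both $C$ and $\can^t$ are bases of $\qClAlg$ parametrized by $\Z^m$ via leading degrees will force $C=\can^t$. Unlike the proof of Theorem \ref{thm:acyclic}, I would not try to decompose $\overline{E}^t_a$ directly in $\inj^t$, but instead work in the sister seed $t'=\mu_{V_1}t$, where Lemma \ref{lem:comparison} provides a clean dictionary between $\{X_k(t'),S_k(t')\}$ and $\{X_k(t),I_k(t)\}$.

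The first step is to identify each $\overline{E}^{t'}_a$ with an element of the injective pointed set $\inj^t$. By Lemma \ref{lem:comparison}, every factor $S_k(t')^{[-a_k]_+}$ or $X_k(t')^{[a_k]_+}$ in the defining product of $\overline{E}^{t'}_a$ is a power of $X_k(t)$ or of $I_k(t)$; for each vertex $k$, only one of these appears because $[a_k]_+\cdot[-a_k]_+=0$. By Lemma \ref{lem:variable_commute} these factors pairwise $q$-commute, so their order only affects a $q$-power. Combining this with Lemma \ref{lem:keep_pointed_bipartite}, which ensures $\overline{E}^{t'}_a$ is pointed in $\cT(t)$, and comparing leading Laurent monomials with the $t$-normalized rearrangement, one obtains $\overline{E}^{t'}_a=\inj^t(f(a),u(a),v(a))$ for parameters $(f(a),u(a),v(a))$ read directly off $a$ and the partition $V_0\sqcup V_1$.

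Next, since $C$ is a common basis for all acyclic seeds, $C_a$ admits a finite bar-invariant $(\prec_{BZ,t'},\Bm)$-triangular decomposition in $\{\overline{E}^{t'}_b\}$. Substituting the identification from the previous step converts this into a finite decomposition of $C_a$ in $\inj^t$ with one coefficient equal to $1$ and the rest in $\Bm$. Since $C_a$ is pointed in $\cT(t)$ by Lemma \ref{lem:change_BZ_order}, Lemma \ref{lem:has_triangular_order}(ii) upgrades this to a $(\prec_t,\Bm)$-unitriangular decomposition. Composing with the $(\prec_t,\Bm)$-unitriangularity of $\inj^t$ to $\can^t$ then shows $C_a$ is $(\prec_t,\Bm)$-unitriangular to $\can^t$, and the usual bar-invariance argument (non-leading coefficients lie in $\Bm\cap\overline{\Bm}=0$) forces $C_a\in\can^t$.

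The step I expect to need the most care is the substitution identification: one must verify that $\overline{E}^{t'}_a$, normalized via $[\cdot]^{t'}$, actually equals the $[\cdot]^{t}$-normalized element $\inj^t(f(a),u(a),v(a))$. This reduces to observing that both expressions are pointed in $\cT(t)$ (using Lemma \ref{lem:keep_pointed_bipartite}) and agree on their leading Laurent monomial, so the a priori $q$-power discrepancy between the two normalizations is trivial. Once that identification is secured, the remainder of the argument is a routine composition of unitriangularities together with the uniqueness of bar-invariant $(\prec_t,\Bm)$-unitriangular expansions.
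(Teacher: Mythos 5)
Your identification of the bar-reversed standard monomials of the sister seed $t'=\mu_{V_1}t$ with elements of the injective pointed set $\inj^{t}$ (via Lemma \ref{lem:comparison}, Lemma \ref{lem:variable_commute}, Lemma \ref{lem:keep_pointed_bipartite}, and a comparison of the two normalizations) is sound, and it is exactly the computation \eqref{eq:rewrite_monomial} in the paper. The gap lies in what you do afterwards: your argument presupposes that the triangular basis $\can^{t}$ exists and that $\inj^{t}$ is $(\prec_{t},\Bm)$-unitriangular to it, and then composes unitriangular transitions and invokes bar-invariance. But Theorem \ref{thm:bipartite} is the bipartite case of Conjecture \ref{conj:symmetrizable}, i.e.\ it is meant for bipartite \emph{skew-symmetrizable} seeds, and in this paper the existence of $\can^{t}$ is only available for acyclic \emph{skew-symmetric} seeds (Theorem \ref{thm:ayclic_triangular_basis}, which rests on \cite{KimuraQin14} and the dual canonical basis). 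In the intended generality there is therefore no basis $\can^{t}$ for your transitions to land in, and the argument cannot get started.

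The paper's proof avoids this by turning your first step into the entire proof: once $\{\overline{E}_{a}\}$ is identified with $\inj^{t}$, one verifies directly that the BZ basis $C$ satisfies the defining conditions of Definition \ref{def:triangular_basis} in the seed $t$ --- it contains the required (quantum cluster and injective) monomials, its elements are bar-invariant and pointed with leading degrees parametrizing $\Z^{m}$, and the triangularity under multiplication by $X_{i}(t)$ is quoted from \cite{BerensteinZelevinsky2012}. This establishes the existence of $\can^{t}$ and the equality $C=\can^{t}$ simultaneously, with no appeal to Theorem \ref{thm:ayclic_triangular_basis} or Theorem \ref{thm:common_triangular_basis}. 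If you restrict attention to skew-symmetric bipartite seeds, your route does go through (it is then essentially a shortcut variant of case (i) of the proof of Theorem \ref{thm:acyclic}), but it proves a strictly weaker statement than the theorem as stated; to repair the proposal in general you would need to replace the appeal to the existence of $\can^{t}$ by a direct verification of Definition \ref{def:triangular_basis} for $C$, which is precisely what the paper does.
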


\begin{proof}

Notice that, in the seed $t'$, the vertices in $V_{1}$ are source
points and $\triangleleft$-superior than those in $V_{0}$. Using
Lemma \ref{lem:variable_commute}(ii), we have, for any $a\in\Z^{m}$, 

\begin{align}
\overline{E}_{a} & =[\prod_{j\in V_{1}}S_{j}(t')^{[-a_{j}]_{+}}*\prod_{i\in V_{0}}S_{i}(t')^{[-a_{i}]_{+}}*\prod_{j\in V_{1}}X_{j}(t')^{[a_{j}]_{+}}*\prod_{i\in V_{0}}X_{i}(t')^{[a_{i}]_{+}}*\prod_{n+1\leq j\leq m}X_{j}(t')^{a_{j}}]^{t'}\nonumber \\
 & =[\prod_{j\in V_{1}}X_{j}(t)^{[-a_{j}]_{+}}*\prod_{i\in V_{0}}I_{i}(t)^{[-a_{i}]_{+}}*\prod_{j\in V_{1}}I_{j}(t)^{[a_{j}]_{+}}*\prod_{i\in V_{0}}X_{i}(t)^{[a_{i}]_{+}}*\prod_{n+1\leq j\leq m}X_{j}(t')^{a_{j}}]^{t'}.\nonumber \\
 & =[\prod_{j\in V_{1}}X_{j}(t)^{[-a_{j}]_{+}}*\prod_{i\in V_{0}}X_{i}(t)^{[a_{i}]_{+}}*\prod_{i\in V_{0}}I_{i}(t)^{[-a_{i}]_{+}}*\prod_{j\in V_{1}}I_{j}(t)^{[a_{j}]_{+}}*\prod_{n+1\leq j\leq m}X_{j}(t')^{a_{j}}]^{t'}\label{eq:rewrite_monomial}
\end{align}

By Lemma \ref{lem:keep_pointed_bipartite}, $\overline{E}_{a}$ remains
to be pointed in $t$. Then \eqref{eq:rewrite_monomial} tells us
that it belongs to the injective pointed set $\inj^{t}$. All elements
of $\inj^{t}$ take this form. So we see the BZ-basis $C$ verifies
the conditions (i)(ii)(iv) in Definition \ref{def:triangular_basis}.
A closer examination tells us that the condition (iii) in Definition
\ref{def:triangular_basis} is also verified by the basis $C$, cf.
\cite{BerensteinZelevinsky2012}. So $C$ is the triangular basis
$\can^{t}$ for the seed $t$.

\end{proof}

\begin{Eg}[Kronecker quiver type]\label{eg:Kronecker}

Let us look at the quantum cluster algebra with the seed $t$ given
by $\tB=\left(\begin{array}{cc}
0 & 2\\
-2 & 0
\end{array}\right)$ and $\Lambda=\left(\begin{array}{cc}
0 & 1\\
-1 & 0
\end{array}\right)$. We have the set of source points $V_{0}=\{1\}$ and the set of sink
points $V_{1}=\{2\}$.

Its seed $t'=\mu_{V_{1}}t$ has the matrices $\tB=\left(\begin{array}{cc}
0 & -2\\
2 & 0
\end{array}\right)$ and $\Lambda=\left(\begin{array}{cc}
0 & -1\\
1 & 0
\end{array}\right)$. The vertex $2$ is the source point in $t'$. It is easy to compute
that

\begin{eqnarray*}
S_{1}(t') & = & X(t')^{-e_{1}}+X(t')^{-e_{1}+2e_{2}}\\
S_{2}(t') & = & X(t')^{-e_{2}+2e_{1}}+X(t')^{-e_{2}}\\
Y_{1}(t') & = & X(t')^{2e_{2}}\\
Y_{2}(t') & = & X(t')^{-2e_{1}}.
\end{eqnarray*}

By \cite[(6.4)]{BerensteinZelevinsky2012}\cite{ding2012bases}, we
have the following bar-invariant pointed element $X_{\delta}$ in
the BZ-basis $C$, given by

\begin{align*}
X_{\delta} & =q^{\Hf}S_{1}(t')*S_{2}(t')-q^{\frac{3}{2}}X_{2}(t')*X_{1}(t')\\
 & =X(t')^{e_{1}-e_{2}}\cdot(1+Y_{2}(t')+Y_{1}(t')Y_{2}(t'))\\
 & =X(t')^{e_{1}-e_{2}}+X(t')^{-e_{1}-e_{2}}+X(t')^{e_{2}-e_{1}}.
\end{align*}

Taking the bar-involution, we obtain

\begin{eqnarray*}
X_{\delta} & = & q^{-\Hf}S_{2}(t')*S_{1}(t')-q^{-\frac{3}{2}}X_{1}(t')*X_{2}(t')\\
 & = & [S_{2}(t')*S_{1}(t')]^{t'}-q^{-2}[X_{1}(t')*X_{2}(t')]^{t'}.
\end{eqnarray*}

We have 
\begin{eqnarray*}
S_{2}(t') & = & X_{2}(t)\\
S_{1}(t') & = & I_{1}(t)\\
 & = & X(t)^{-e_{1}}(1+Y_{1}(t)+(q+q^{-1})Y_{1}(t)Y_{2}(t)+Y_{1}(t)Y_{2}(t)^{2})\\
X_{2}(t') & = & I_{2}(t)\\
 & = & X(t)^{-e_{2}}(1+Y_{2}(t))\\
X_{1}(t') & = & X_{1}(t)
\end{eqnarray*}

Then $X_{\delta}$ can be rewritten as

\begin{align*}
X_{\delta} & =[X_{2}(t)*I_{1}(t)]^{t}-q^{-2}[X_{1}(t)*I_{2}(t)]^{t}\\
 & =X(t)^{-e_{1}+e_{2}}(1+Y_{1}(t)+(1+q^{-2})Y_{1}(t)Y_{2}(t)+q^{-2}Y_{1}(t)Y_{2}(t)^{2})\\
 & \qquad-q^{-2}X^{e_{1}-e_{2}}(1+Y_{2}(t))\\
 & =X(t)^{-e_{1}+e_{2}}(1+Y_{1}(t)+Y_{1}(t)Y_{2}(t))\\
 & =X(t)^{-e_{1}+e_{2}}+X(t)^{-e_{1}-e_{2}}+X(t)^{e_{1}-e_{2}}.
\end{align*}

Notice that the normalization factors do not change:

\begin{align*}
\Lambda(t)(\deg^{t}X_{2}(t),\deg^{t}I_{1}(t)) & =\Lambda(t)(e_{2},-e_{1})= & 1 & =\Lambda(t')(\deg^{t'}S_{2}(t'),\deg^{t'}S_{1}(t'))\\
\Lambda(t)(\deg^{t}X_{1}(t),\deg^{t}I_{2}(t)) & =\Lambda(t)(e_{1},-e_{2})= & -1 & =\Lambda(t')(\deg^{t'}X_{1}(t'),\deg^{t}X_{2}(t')).
\end{align*}

Therefore, the pointed element $X_{\delta}$ is $(\prec_{t},\Bm)$-unitriangular
to the injective pointed set $\inj^{t}$, and consequently $(\prec_{t},\Bm)$-unitriangular
to the triangular basis $\can^{t}$. It follows from its bar-invariance
that $X_{\delta}$ belongs to the triangular basis $\can^{t}$. 

\end{Eg}


\bibliographystyle{halpha}
\bibliography{referenceEprint}

\end{document}